\newtheorem{theorem}{Theorem}
\newtheorem{lemma}{Lemma}
\newtheorem{corollary}{Corollary}
\newtheorem{definition}{Definition\rm}
\newtheorem{remark}{Remark}
\theoremstyle{definition}
\newcommand{\PP}{{\mathbb P}}
\renewcommand{\AA}{{\mathcal A}}
\newcommand{\BB}{{\mathfrak B}}
\newcommand{\RR}{{\mathcal R}}
\newcommand{\IR}{{\mathrm{I\!R}}}
\newcommand{\IN}{{\mathrm{I\!N}}}
\newcommand{\LL}{{\mathcal L}}
\newcommand{\HH}{{\mathcal H}}
\newcommand{\TT}{{\mathcal T}}
\renewcommand{\SS}{{\mathcal S}}
\newcommand{\CC}{{\mathscr C}}
\newcommand{\CCE}{\sigma({\mathscr C}\cup \{E\})}
\newcommand{\supp}{\mathop{\mathrm{supp}}}
\newcommand{\co}{\mathop{\mathrm{co}}}
\newcommand{\cl}{\mathop{\mathrm{cl}}}
\newcommand{\ext}{\mathop{\mathrm{ext}}}
\newcommand{\IE}{\mathop{\mathrm{I\!E}}}
\newcommand{\CF}{{\mathcal C}}
\newcommand{\NWE}{{\mathcal N}}
\begin{document}


\title{A bang-bang principle for the conditional expectation vector measure}
\author{Youcef Askoura$^\dag$ And Mohammed Sbihi$^\ddag$}
\maketitle

{\small $^\dag$LEMMA, Universit\'e Paris II, 
4 rue Blaise Desgoffe, 75006 Paris. youcef.askoura@u-paris2.fr\\
$^\ddag$ENAC - Universit\'e de Toulouse, 7 avenue Edouard Belin, 31055 Toulouse.  mohammed.sbihi@enac.fr}

\begin{abstract}
We prove a conditional expectation bang-bang principle.  Based on properties of the conditional expectation vector measure, we establish that the conditional expectation of a set-valued mapping coincides with the conditional expectation of the set of selections of its extreme points part. As a by-product, we obtain straightforwardly a purification principle.  
\end{abstract}

Keywords : 
Conditional expectation, Lyapunov convexity theorem, Young measures, Bang-bang principle, purification principle. 

MSC : 28B05, 46G10, 46N10.

\section{Introduction}
The classical Lyapunov convexity theorem is a fundamental result for both mathematical economics and the theory of games and optimal control: it is used to provide purification principles and the existence of bang-bang controls. 
It states that a nonatomic finite dimensional vector measure has a convex and compact range. Generally, when the range space of the vector measure is infinite dimensional, the classical convexity theorem of Lyapunov fails. However, an infinite dimensional version of such result established by \citet{KNO75} can be used to establish a purification principle as in \citet{ASK19}. 

Furthermore, many works succeeded to recover this property and to establish adequate bang-bang and purification results by using Maharam types and saturated measure spaces; see \citet{GrP13,KhS13,KhS14,KhS15,KhS16,SAG16} for infinite dimensional Lyapunov convexity theorems, purification processes and applications to the integration of set-valued mappings, equilibrium theory and control systems.  Refer to \citet{KeS09} and the literature therein for further prevalent studies on this direction.

\vspace{0,5cm}
\citet{SOL70}, \citet{DyE77} and  \citet{HeS18b} proved that  the conditional expectation vector measure satistfy a  Lyapunov convexity theorem   by generalizing appropriately the ``nonatomicity" notion. More precisely, some condition is assumed on a $\sigma$-algebra (to be atomless) over its sub-$\sigma$-algebra. The classical nonatomicity becomes atomless over the trivial $\sigma$-algebra. This notion is reformulated as ``setwise coarser sub-$\sigma$-algebra" in \citet{HeS14} where it is successfully used to establish existence of pure strategy equilibria of incomplete information games and some purification process. 
It  is renamed  as ``nowhere equivalent" $\sigma$-algebras and used in \citet{HeS18} to handle basic properties such as  convexity, closure, and compactness for the distribution of correspondences that are necessary for applied work. He and Sun \cite{HeS18} show their condition to be necessary and sufficient for the existence of pure strategy equilibria in some formulations of large games.  

\vspace{0,3cm}
Working with this generalized nonatomicity, and the Lyapunov convexity property, we establish a bang-bang  principle for  the conditional expectation of a set-valued mapping and prove that it generates straightforwardly a purification principle.  Such results are very useful in economics as showed by \citet{HeS18b}.

\vspace{0,3cm}
The paper is organized as follows: in Section \ref{SCEVM},  we revisit and summarize some properties of the conditional expectation vector measure. In Section \ref{SBB}, we prove the bang-bang principle for the conditional expectation vector measure and obtain as a corollary a purification result. We end by an Appendix in Section \ref{SAP}, where we collect some results used in a relevant way and where we give an alternative elementary proof of the Lyapunov property of the conditional expectation vector measure based on Knowles' characterization of Lyapunov vector measures with infinite dimensional range spaces \cite{KNO75}.

\section{The conditional expectation vector measure}\label{SCEVM}
Consider a probability space $(\Omega,\AA,\mu)$, where $\AA$ is a $\sigma$-algebra. Denote by $\AA^+$ the set of elements $E\in \AA$ with $\mu(E)>0$. For a sub-$\sigma$-algebra $\CC\subset \AA$ and $E\in \AA$, denote by $\CC_E=\{A\cap E : A\in \CC\}$ the trace $\sigma$-algebra of $E$ in $\CC$. We denote by $\bar{E}$ the complement of the  set $E$. Given two sets $E_1,E_2\in \AA$, $E_1\Delta E_2$ refers to the symmetric difference between $E_1$ and $E_2$. In order to simplify notation, when $\CC$ is a sub-$\sigma$-algebra of $\AA$, we keep $\mu$ instead of $\mu_\CC$ for the restriction of $\mu$ to $\CC$.  Since different  $\sigma$-algebra are used simultaneously, the $L^p$-spaces are denoted in a more specific way. For instance $L^1(\Omega,\AA,\mu)$, or  $L^1(\Omega,\CC,\mu)$ to refer to the underlying $\sigma$-algebra. 
Given $f \in L^1(\Omega,\AA,\mu)$, we denote by $\IE^\mu(f|\CC)$, or simply  $\IE(f|\CC)$ if  no confusion may occur, the conditional expectation of $f$ relatively to $\CC$ under the measure $\mu$.

The set of real numbers is denoted by $\IR$ and that of non-negative integers by $\IN$. The set $\IN^*$ refers to $\IN\setminus \{0\}.$

The following notion is introduced in \cite{HeS18} as a ``nowhere equivalence" between $\sigma$-algebra. Even if ``nowhere equivalence" reflects intuitively the introduced notion, we rename it in order to reflect the influence of the underlying measure $\mu$.

\begin{definition}[\cite{HeS18}]\label{MUC}A sub-$\sigma$-algebra $\CC\subset \AA$ is said to be $\mu$-coarser than $\AA$ iff for every $E\in \AA^+$, there exists $E_0\in \AA$, $E_0\subset E$ such that $$\mu(F\Delta E_0)>0,\forall F\in \CC_E. $$
\end{definition}
It is worth mentioning that when $\CC_E$ is $\mu$-complete, the condition ``there exists $E_0\in \AA$, $E_0\subset E$ such that $\mu(F\Delta E_0)>0,\forall F\in \CC_E$" is equivalent to the following: there exists   $E_0\in \AA^+$ such that $E_0\subset E$ and $E_0\notin \CC_E$. So, in this case, Definition \ref{MUC} expresses the absence of $\CC$-atoms in $\AA$ in the sense of \citet{HaN66} (p. 443).

\vspace{0,5cm}
This notion generalizes the nonatomicity notion. We see straightforwardly that $\mu$ is nonatomic iff $\{\emptyset,\Omega\}$ is $\mu$-coarser than $\AA$. In case of nonatomicity of $\mu$, every finite sub-$\sigma$-algebra of $\AA$ is $\mu$-coarser than $\AA$. It is easy to construct an infinitely generated $\mu$-coarser sub-$\sigma$-algebra. For instance consider the unit interval $[0,\; 1]$ endowed with its Borel $\sigma$-algebra $\BB([0,\;1])$ and the Lebesgue measure $\lambda$. Obviously, $\sigma(\{]\frac{1}{n+1},\frac{1}{n}], n\in \IN^*\})$ is $\lambda$-coarser than $\BB([0,\;1])$.
It is clear as well that the existence of $\mu$-coarser sub-$\sigma$-algebras in $\AA$ implies necessarily that $\mu$ is nonatomic. Hence, the previous notion is a strengthening of the nonatomicity notion. While the simple nonatomicity of $\mu$ does not suffice to handle some situations, the $\mu$-``coarserness" notion can be used successfully. We will see below that this is the case for the Lyapunov convexity property for the conditional expectation vector measure.

An equivalent concept, as proved in  \cite{HeS18} (Lemma 1, p 616), is the following notion:

\begin{definition}[\cite{SOL70,DyE77,HoK84}]\label{ATO} $\AA$  is said to be atomless over a sub-$\sigma$-algebra $\CC\subset \AA$  iff for every $E\in \AA^+$, there exists $E_0\in \AA$, $E_0\subset E$ such that: on some set of $\AA^+$

$$0<\mu(E_0|\CC)<\mu(E|\CC), $$
where the conditional probability $\mu(D|\CC)$ corresponds to $\IE{}^\mu(\chi_D|\CC)$
and $\chi_D$ is the indicator function of the set $D$.
\end{definition}

\vspace{0,5cm}
Let $X$ be a Banach space. We comply with \cite{DiU77} and call a vector measure an additive function $G:\AA\rightarrow X$. A vector measure $G:\AA\rightarrow X$ is said to be $\mu$-continuous, and this is denoted by $G<\!\!<\mu$,  iff $\underset{\mu(E)\rightarrow 0}{\lim} G(E)=0$.

\begin{definition}
An X-valued countably additive vector measure $G$ is said to be Lyapunov if for every $E\in \AA$, $G(\AA_E)=\{G(A\cap E): A\in \AA\}$ is convex and weakly compact.
\end{definition}

It is known that (\cite{KLU73}, Theorem 2, p.49) the range of a Lyapunov vector measure can be described by: $$G(\AA_E)=\left\{\int_E fdG : f \text{ is measurable defined from }E \text{ to }[0,1]\right\}.$$
Conversely, every measure satisfying this property is Lyapunov, since for every $E\in \AA^+$, the operator $g\mapsto \int_E g dG$ defined from $L^\infty (E,\AA_E,\mu)$ into $X$ is continuous for the weak* topology on $L^\infty(E,\AA_E,\mu)$ and the weak topology on $X$ \cite{DiU77}. 

Let $\CC\subset \AA$ be a sub-$\sigma$-algebra. It is clear that the vector measure $G_\CC:\AA \rightarrow L^1(\Omega,\CC,\mu)$ defined by $$E\mapsto \IE(\chi_E|\CC)$$
is bounded, countably additive and since $\IE(\chi_A|\CC)=0$ for every $\mu$-null set $A$, $G_\CC<\!\!< \mu$ (\cite{DiU77}, Theorem 1, p.10). Furthermore, we can check easily that for every $g\in L^\infty(\Omega,\AA,\mu)$, $\int_E g dG_\CC= \IE(g\chi_E|\CC)$. This results follows straightforwardly from the norm-continuity of  $\IE(\cdot|\CC):L^\infty(\Omega,\AA,\mu)\rightarrow L^1(\Omega,\CC,\mu)$ and the denseness of the set of simple functions in $L^\infty(\Omega,\AA,\mu)$.

Let $f\in L^1(\Omega,\AA,\mu)$ and define the vector measure $F_\CC : \AA\rightarrow L^1(\Omega,\CC,\mu)$ by $F_\CC(E)=\IE(f\chi_E|\CC)$. Similarly, $F_\CC$ is bounded, countably additive, $F_\CC<\!\!<\mu$, and for every $g\in L^\infty(\Omega,\AA,\mu)$, $\int_E g dF_\CC= \IE(gf\chi_E|\CC)$.  By an abuse of notation we designate below $F_\CC$ by $fG_\CC$ and then perform the identification $F_\CC=fG_\CC$.

Let us now consider the vector case $f=(f_1,f_2,\dots,f_n )\in {L^1 (\Omega,\AA,\mu)^n}$ and consider the vector measure  $fG_\CC$ defined by 

$$E\mapsto (f_1G_\CC(E), f_2 G_\CC(E), \dots, f_n G_\CC(E)).$$

The following theorem is obtained in \cite{SOL70}. A proof of an  equivalent statement is given in  \cite{HeS18b}. For the sake of completeness, we give a new and elementary proof in the Appendix.

\begin{theorem}[\cite{SOL70}]\label{theorem-induction}
Let $\NWE$ be the set  of sub-$\sigma$-algebras that are $\mu$-coarser than $\AA$.
For all $n\in \IN^*$, the following statement holds true
$$P(n): \forall f=(f_1,f_2,\dots,f_n)\in {L^1(\Omega,\AA,\mu)}^n, \forall \CC\in \NWE, fG_\CC \mbox{ is Lyapunov}.$$
\end{theorem}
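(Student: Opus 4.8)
The plan is to proceed by induction on $n$, using Knowles' characterization of Lyapunov vector measures with infinite-dimensional range (recalled in the Appendix): a countably additive $\mu$-continuous vector measure $G$ is Lyapunov if and only if for every $E \in \AA^+$ and every $\varepsilon > 0$ there is a set $E_0 \subset E$ with $G(E_0)$ close to $\tfrac12 G(E)$ in a suitable sense, or — more robustly for the induction — that one can always "split" $E$ into two halves carrying equal measure values. The base case $P(1)$ is where the $\mu$-coarserness hypothesis does the essential work, and the inductive step $P(n-1) \Rightarrow P(n)$ should be a comparatively routine reduction.

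For the base case $P(1)$, fix $f \in L^1(\Omega,\AA,\mu)$ and $\CC \in \NWE$, and consider $fG_\CC : E \mapsto \IE(f\chi_E \mid \CC)$. I would first reduce to bounded $f$ by a truncation/approximation argument (the Lyapunov property passes to limits of vector measures converging in an appropriate operator sense, since $\IE(\cdot\mid\CC)$ is norm-continuous), and then, splitting $f$ into positive and negative parts and rescaling, reduce to $f \geq 0$ with $\|f\|_\infty \leq 1$. The goal is to verify that for each $E \in \AA^+$ the range $fG_\CC(\AA_E)$ is convex and weakly compact; by the remark following the definition of Lyapunov measures it suffices to show $fG_\CC(\AA_E) = \{\IE(hf\chi_E\mid\CC) : h : E \to [0,1] \text{ measurable}\}$, and the nontrivial inclusion is that every such $\IE(hf\chi_E\mid\CC)$ is attained by an indicator $\chi_{E'}$, $E' \subset E$. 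This is exactly a conditional-expectation "exact fill-up" statement, and it is here that $\mu$-coarserness of $\CC$ over $\AA$ is invoked: the hypothesis guarantees that inside any $E \in \AA^+$ one can find a subset $E_0$ that is genuinely finer than anything in $\CC_E$, which is precisely what prevents the conditional expectation from being "frozen" on pieces of $E$ and allows the standard bisection/exhaustion construction to run. Concretely, I would set up the bisection: given the target $\IE(hf\chi_E\mid\CC)$, use Definition \ref{ATO} (equivalent to $\mu$-coarserness) to split $E$ into two parts $E_1, E_2$ on which the conditional expectations of $f$ are not proportional on a positive-probability set, iterate via a Zorn/exhaustion argument, and assemble the limit set $E'$; countable additivity and $\mu$-continuity of $fG_\CC$ guarantee convergence. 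Equivalently, one may phrase the base case through the operator $h \mapsto \IE(hf\chi_E\mid\CC)$ from $L^\infty(E,\AA_E,\mu)$ to $L^1(\Omega,\CC,\mu)$, noting it is weak*-to-weak continuous so the range of the unit ball is weakly compact and convex, and that $\mu$-coarserness forces the extreme points of that ball (the indicators) to already sweep out the whole range.

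For the inductive step, assume $P(n-1)$ and take $f = (f_1,\dots,f_n)$ and $\CC \in \NWE$. Restricting to $E \in \AA^+$, I would first use $P(1)$ applied cleverly — or rather the fact that the components $f_1 G_\CC, \dots, f_{n-1}G_\CC$ jointly satisfy $P(n-1)$ — to handle the first $n-1$ coordinates, and then adjoin the last coordinate by a fiber-wise argument: on the level sets where $(f_1 G_\CC, \dots, f_{n-1}G_\CC)$ has been pinned down, one still has freedom to adjust $f_n G_\CC$ because $\CC$ restricted to the relevant trace $\sigma$-algebra is still $\mu$-coarser (this hereditary property of $\mu$-coarserness should be checked, or extracted from \cite{HeS18}). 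The cleanest route is probably the classical Lyapunov induction packaged in Knowles' framework: show that $fG_\CC$ has the "splitting property" — for every $E$, a halving set exists — by splitting first for the $(n-1)$-dimensional measure using $P(n-1)$ to get $E_1 \cup E_2 = E$ with $f_i G_\CC(E_1) = \tfrac12 f_i G_\CC(E)$ for $i < n$, then apply $P(1)$ inside each of $E_1$ and $E_2$ separately to rebalance the $n$-th coordinate without disturbing the first $n-1$, using that indicator adjustments within $E_1$ contribute $0$ to the already-balanced coordinates would be false in general, so instead one balances $f_n$ across the partition by the standard "cut each piece in half and recombine" Lyapunov trick, which only uses $P(1)$ coordinatewise.

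The main obstacle I anticipate is the base case $P(1)$, specifically proving that the range of $h \mapsto \IE(hf\chi_E\mid\CC)$ over $\{0,1\}$-valued $h$ already equals its range over $[0,1]$-valued $h$: this is the genuine Lyapunov/bang-bang phenomenon, and it is exactly where atomlessness-over-$\CC$ must be used in full force, presumably via a transfinite exhaustion argument showing that any "gap" between an indicator and a target would contradict $\mu$-coarserness. Getting the hereditary behaviour of $\mu$-coarserness under passing to trace $\sigma$-algebras $\AA_{E_1}$, $\CC_{E_1}$ right — so that the inductive hypothesis and $P(1)$ can be re-applied inside subsets — is the second delicate point; I would isolate it as a short lemma. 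Everything else (truncation reductions, countable additivity, weak*-weak continuity giving weak compactness) is routine.
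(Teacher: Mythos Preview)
Your overall architecture --- induction on $n$, with Knowles' characterization as the engine --- matches the paper. But the proposal has a genuine gap in the inductive step, and the base case is run through the wrong form of Knowles' criterion.

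\medskip
\textbf{The inductive step does not close.} You split $E=E_1\cup E_2$ so that $f_iG_\CC(E_1)=\tfrac12 f_iG_\CC(E)$ for $i<n$ via $P(n-1)$, and then propose to ``rebalance $f_n$ by the standard cut-in-half-and-recombine trick, using only $P(1)$ coordinatewise.'' That trick is a finite-dimensional device: in the classical proof one lands on the correct half for the last coordinate by an intermediate-value argument on a real-valued function. Here the last coordinate takes values in $L^1(\Omega,\CC,\mu)$, and no order/IVT mechanism is available; halving $E_1$ or $E_2$ for $f_n$ alone will perturb the first $n-1$ coordinates, and there is no recombination that repairs all of them simultaneously. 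This is exactly the step the paper works hardest on, and it does something quite different: it uses the \emph{non-injectivity} form of Knowles' criterion (Theorem~\ref{TH4}), not the halving form, as the target. After reducing to $f_i$ of constant sign on $E$ and setting $h_i=\dfrac{|f_i|}{\sum_j|f_j|}\chi_E$ so that $\sum_i h_i=\chi_E$, the paper enlarges $\CC$ to $\CCE:=\sigma(\CC\cup\{E\})$ (still $\mu$-coarser by Lemma~\ref{CARSIG}), applies the induction hypothesis to the \emph{centered} functions $h_i^0=h_i-\IE(h_i\mid\CCE)$, $i\le n$, invokes the halving property of the resulting Lyapunov measure once to get $A_0\subset A$, and builds $\bar g=\varphi-\IE(\varphi\mid\CCE)$ with $\varphi=\chi_{A_0}-\chi_{A\setminus A_0}$. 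By construction $\IE(\bar g\mid\CCE)=0$ and $\IE(h_i\bar g\mid\CCE)=0$ for $i\le n$; the relation $h_{n+1}=\chi_E-\sum_{i\le n}h_i$ then yields the $(n{+}1)$-th equation for free. Dividing by $\sum_j|f_j|$ gives the desired kernel element for $fG_\CC$. The key idea you are missing is this normalization $\sum h_i=\chi_E$ together with the passage to $\CCE$, which is what lets one coordinate come ``for free'' instead of by rebalancing.

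\medskip
\textbf{Base case.} Your plan is to show directly that indicators sweep out the range of $h\mapsto\IE(hf\chi_E\mid\CC)$ over $[0,1]$-valued $h$, via a bisection/exhaustion from Definition~\ref{ATO}. That may be workable, but it is the hard direction and you have only gestured at it. The paper's route is shorter: use Theorem~\ref{TH4} in its ``operator not one-to-one'' form. Given $E\in\AA^+$ (after reducing to $|f|>\varepsilon$ on $E$), pick $E_0\subset E$ witnessing $\mu$-coarserness, set $g=\chi_{E_0}-\IE(\chi_{E_0}\mid\CCE)$, check $g$ is nonzero (because $E_0$ differs from every $\CC_E$-set on positive measure), vanishes outside $E$, and satisfies $\IE(g\chi_E\mid\CC)=\IE\!\big(\IE(g\chi_E\mid\CCE)\,\big|\,\CC\big)=0$; then $\bar g=g/f$ kills $fG_\CC$. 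No truncation, no exhaustion, no Zorn.

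\medskip
Finally, the hereditary lemma you flag is needed, but not for traces $\AA_{E}$: what the argument uses is that enlarging $\CC$ to $\CCE$ preserves $\mu$-coarserness (Lemma~\ref{CARSIG}), so that the induction hypothesis can be applied with $\CCE$ in place of $\CC$.
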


In connection with some results in the literature, let us reformulate the previous theorem in a slightly more general way. Its proof, in the Appendix, shows that it is a simple consequence of Theorem \ref{theorem-induction}. 

\begin{theorem}\label{THLG}
Let $\mu=(\mu_1,\dots,\mu_n)$ be a vector of finite measures on $(\Omega,\AA)$ and $\CC$ a sub-$\sigma$-algebra of $\AA$. Assume that $\CC$ is $\mu_i$-coarser than $\AA$, for every $i=1,...,n$. Let $(f_1,\dots,f_n)\in \prod_{i=1}^n L^1(\Omega,\AA,\mu_i)$. Consider 
the vector measure  $F_\CC:\AA\rightarrow \prod_{i=1}^n L^1(\Omega,\CC,\mu_i)$ defined by 
$$
F_\CC(E)= (\IE{}^{\mu_1}(f_1\chi_E | \CC),\IE{}^{\mu_2}(f_2\chi_E | \CC), \dots, \IE{}^{\mu_n}(f_n\chi_E | \CC)),
$$

Then, $F_\CC$ is Lyapunov. 
\end{theorem}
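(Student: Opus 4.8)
The plan is to reduce Theorem \ref{THLG} to Theorem \ref{theorem-induction} by a change-of-measure device that replaces the vector $(\mu_1,\dots,\mu_n)$ of finite measures by a single probability measure dominating all of them, and then absorbing the Radon--Nikodym densities into the integrands $f_i$. Concretely, first I would set $\mu = \sum_{i=1}^n \mu_i$ (or any convex combination with positive weights, normalized to a probability measure if one wishes); then each $\mu_i \ll \mu$, so there exist densities $h_i = \frac{d\mu_i}{d\mu} \in L^1(\Omega,\AA,\mu)$, $h_i \ge 0$. The first point to verify is that the hypothesis transfers: since $\CC$ is $\mu_i$-coarser than $\AA$ for every $i$, and since $\mu$ is a positive linear combination of the $\mu_i$, one checks from Definition \ref{MUC} that $\CC$ is $\mu$-coarser than $\AA$ (a set $E_0\subset E$ with $\mu_i(F\Delta E_0)>0$ for all $F\in\CC_E$ need not serve all $i$ simultaneously, so here the cleanest route is via Definition \ref{ATO}: $\AA$ atomless over $\CC$ for each $\mu_i$ does \emph{not} obviously give it for the sum either). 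I would instead argue directly: take $E\in\AA^+$ with $\mu(E)>0$; then $\mu_{i_0}(E)>0$ for some $i_0$, and $\mu_{i_0}$-coarserness of $\CC$ supplies $E_0\subset E$ with $\mu_{i_0}(F\Delta E_0)>0$ for all $F\in\CC_E$, whence a fortiori $\mu(F\Delta E_0)>0$ for all $F\in\CC_E$. So $\CC$ is $\mu$-coarser than $\AA$.

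Next I would relate the two conditional expectations. For $\phi\in L^1(\Omega,\AA,\mu_i)$ one has the standard identity $\IE^{\mu_i}(\phi\mid\CC) = \dfrac{\IE^{\mu}(\phi\, h_i\mid\CC)}{\IE^{\mu}(h_i\mid\CC)}$ $\mu$-a.e.\ on $\{\IE^{\mu}(h_i\mid\CC)>0\}$ (and the complement is $\mu_i$-null), which is just the abstract Bayes formula. Writing $k_i := \IE^{\mu}(h_i\mid\CC)$, this means that, up to the $\CC$-measurable multiplicative correction $1/k_i$, the $i$-th coordinate $\IE^{\mu_i}(f_i\chi_E\mid\CC)$ of $F_\CC(E)$ equals $\IE^{\mu}(f_i h_i\chi_E\mid\CC)$, i.e.\ the $i$-th coordinate of $(f h) G_\CC(E)$ with $f h := (f_1 h_1,\dots, f_n h_n)\in L^1(\Omega,\AA,\mu)^n$. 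Thus $F_\CC = T\circ\big((fh)G_\CC\big)$ where $T:\prod_i L^1(\Omega,\CC,\mu)\to\prod_i L^1(\Omega,\CC,\mu_i)$ acts coordinatewise by $\psi_i\mapsto \psi_i/k_i$. The point is that $T$ is a bounded linear bijection that is moreover a homeomorphism for the weak topologies: multiplication by the fixed function $1/k_i$ (bounded below away from $0$ on the relevant set, and one restricts to that set, which carries all the $\mu_i$-mass) is weak-to-weak continuous $L^1(\Omega,\CC,\mu)\to L^1(\Omega,\CC,\mu_i)$, with inverse multiplication by $k_i$ in the other direction, using $\frac{d\mu_i\!\restriction_\CC}{d\mu\!\restriction_\CC}=k_i$. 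Linear homeomorphisms preserve convexity and weak compactness.

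Finally I would assemble the argument. By Theorem \ref{theorem-induction} applied to $\CC\in\NWE$ (which holds by the first step) and to $fh\in L^1(\Omega,\AA,\mu)^n$, the vector measure $(fh)G_\CC$ is Lyapunov, so for every $E\in\AA$ the range $\big((fh)G_\CC\big)(\AA_E)$ is convex and weakly compact in $\prod_i L^1(\Omega,\CC,\mu)$. Applying the linear homeomorphism $T$ and using $F_\CC(\AA_E) = T\big(((fh)G_\CC)(\AA_E)\big)$ gives that $F_\CC(\AA_E)$ is convex and weakly compact, which is exactly the statement that $F_\CC$ is Lyapunov. The main obstacle I anticipate is the bookkeeping around the null sets $\{k_i = 0\}$ and the precise verification that $T$ and its inverse are well-defined bounded operators and weakly bicontinuous; once one is careful that each $\mu_i$ lives on $\{k_i>0\}$ and that $k_i$ is the density of $\mu_i\!\restriction_\CC$ with respect to $\mu\!\restriction_\CC$, this becomes routine, so the essential content really is the reduction to the single-measure Theorem \ref{theorem-induction}.
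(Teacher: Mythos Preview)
Your reduction is the same as the paper's: introduce a single dominating measure $\mu$ (the paper takes $\mu=\tfrac{1}{n}\sum_i\mu_i$), observe that $\CC$ is $\mu$-coarser than $\AA$, pass to the Radon--Nikodym densities $h_i=d\mu_i/d\mu$, and apply Theorem~\ref{theorem-induction} to $(f_1h_1,\dots,f_nh_n)\in L^1(\Omega,\AA,\mu)^n$. So at the level of overall strategy your proposal matches the paper.

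Where you diverge is in how you transfer the Lyapunov property from $(fh)G_\CC$ to $F_\CC$. You invoke the Bayes formula $\IE^{\mu_i}(\phi\mid\CC)=\IE^{\mu}(\phi h_i\mid\CC)/k_i$ with $k_i=\IE^{\mu}(h_i\mid\CC)$ and build a coordinatewise linear map $T:\psi_i\mapsto\psi_i/k_i$ which you claim is a weak-to-weak homeomorphism, so that convexity and weak compactness of the range transfer directly. The paper instead uses Knowles' criterion (Theorem~\ref{TH4}) on both ends: from Theorem~\ref{theorem-induction} one gets, for each $E\in\AA^+$, a nonzero $g\in L^\infty(\Omega,\AA,\mu)$ vanishing off $E$ with $\IE^{\mu}(gf_ih_i\chi_E\mid\CC)=0$ for all $i$; then for every $A\in\CC$,
\[
\int_A gf_ih_i\chi_E\,d\mu=\int_A gf_i\chi_E\,d\mu_i,
\]
so $\IE^{\mu_i}(gf_i\chi_E\mid\CC)=0$ for all $i$, and Theorem~\ref{TH4} applied to $F_\CC$ concludes. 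This avoids the Bayes formula and the homeomorphism $T$ altogether.

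Your route is correct but, as you yourself note, requires care: $T$ as written is not a bijection from $\prod_i L^1(\Omega,\CC,\mu)$ onto $\prod_i L^1(\Omega,\CC,\mu_i)$ because it is not injective on functions supported in $\{k_i=0\}$. The fix is to observe that the $i$-th coordinate of the range of $(fh)G_\CC$ is automatically supported in $\{k_i>0\}$ (since $\IE^{\mu}(h_i\mid\CC)=0$ on a $\CC$-set forces $h_i=0$ there $\mu$-a.e.), and to restrict $T$ to that subspace, on which it becomes an isometric isomorphism onto $\prod_i L^1(\Omega,\CC,\mu_i)$. With that restriction in place your argument goes through. The paper's execution via Theorem~\ref{TH4} is shorter precisely because it sidesteps this bookkeeping: it only needs the one-line implication ``$\IE^{\mu}(gf_ih_i\chi_E\mid\CC)=0\Rightarrow\IE^{\mu_i}(gf_i\chi_E\mid\CC)=0$'', which follows immediately from the defining property of conditional expectation and the change of variables $d\mu_i=h_i\,d\mu$.
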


\begin{remark}
Observe that the property of being $\mu$-coarser is in some sense hereditary. Specifically, for every $E\in \AA^+$, if $\CC$ is $\mu$-coarser than $\AA$, then $\CC_E$ is $\mu$-coarser than $\AA_E$ too. Hence, if the range of $fG_\CC$ is convex and weakly compact, then the range of $fG_\CC$ restricted to $\AA_E$, for $E\in \AA^+$, is necessarily convex and weakly compact. That is, Theorem \ref{theorem-induction} is not different from Soler's (\cite{SOL70}, proposition IV.12), stating simply that the range of $fG_\CC$ is convex and weakly compact. Note that Dynkin and Evstigneev (\cite{DyE77}, Lemma 5.1) showed the scalar version of such a result.

In the same spirit, we can see analogously that by setting $f_i=1$ for every $i$ in Theorem  \ref{THLG}, we recover Proposition 1 in \cite{HeS18b}.\footnote{Consider without loss of generality that Proposition 1 in \cite{HeS18b} corresponds exactly to the special case of Theorem  \ref{THLG}, obtained by setting $f_i=1$, for every $i$.} But the converse implication is also true. Indeed, it is observed in \cite{HeS18b}, that Proposition 1 in \cite{HeS18b} implies Theorem \ref{theorem-induction}, when the functions $f_i$ are positive.
The general case can be handled by decomposing each $f_i$ into its positive and negative  parts $f_i=f_i^+ - f_i^-$   and considering the vector measure $F'$ defined by 
$$F'(E)=(\IE(f_1^-\chi_E|\CC),\dots,\IE(f_n^-\chi_E|\CC), \IE(f_1^+\chi_E|\CC),\dots,\IE(f_n^+\chi_E|\CC) )$$ which is Lyapunov as stated above. Hence by Theorem \ref{TH4} in the Appendix, for every $E\in \AA^+$ there exists $g\in L^\infty(\Omega,\AA,\mu)$ vanishing outside of $E$, such that $\int_{E} g dF'=0$. Then, $\IE(f_i^{-}\chi_{E}g|\CC)=\IE(f_i^{+}\chi_{E}g|\CC)=0$ for every $i$, which is not else but $\int_{E} g dfG_\CC=0$. Consequently, by Theorem \ref{TH4} 	again, the vector measure $fG_\CC$ is Lyapunov. 

At this stage, Proposition 1 in \cite{HeS18b} implies Theorem \ref{theorem-induction}. However,  Theorem \ref{theorem-induction} is a particular case of Theorem \ref{THLG} which is itself a simple consequence of the former as will be shown in the Appendix.

\end{remark}

\section{Bang-bang principle for conditional expectations}\label{SBB}

By default, topological spaces are endowed with their Borel $\sigma$-algebra denoted for a topological space $E$ by $\mathcal{B}(E).$ The convex hull  operator is denoted by $\co(\cdot)$ and the topological closure by $\cl(\cdot)$.

\vspace{0,3cm}

Let $(\Omega,\AA,\mu)$ be a probability space, $X$ a separable Banach space and $T:\Omega\rightarrow X$ a set-valued map.Then, $T$ is said to be \emph{measurable} iff $T^-{O}:=\{\omega\in \Omega : T(\omega)\cap O\neq \emptyset\}\in\AA$ for all open set $O\subset X$.  A \emph{measurable selection} of $T$ is a measurable function $f : \Omega\rightarrow X$, such that $f(\omega)\in T(\omega)$, for all $\omega\in \Omega$.  
If $T:\Omega\rightarrow X$ is  with convex values, the extreme points of values of $T$ are represented in the following set-valued map $\ext(T):\Omega\rightarrow X$ by $\ext(T)(\omega)=\{y\in T(\omega) : y \text{ is an extreme point of }T (\omega)\}$.
We say that $T$ is $L^1$-bounded if there is an integrable function $\varphi : \Omega\rightarrow \IR$ such that $$\|x\|\leq \varphi(\omega), \forall \omega\in \Omega, \forall x\in T(\omega).$$
 
By assuming that $T$ is $L^1$-bounded, define the integral of $T$ by the set of Bochner integrals of its measurable selections
$$ \int_\Omega T d\mu =\left\{\int_\Omega f d\mu : f \text{ is a measurable selection of } T\right\}.$$

Note that since $X$ is separable, the measurability and strong measurability (or $\mu$-measurability) of $X$-valued functions\footnote{Being a $\mu$-a.e. limit of a sequence of simple functions.} are identical (\cite{DiU77}, Theorem 2, p. 42). We denote by $L^1(\mu,X)$ the set of equivalence classes of $\mu$-a.e. equal Bochner integrable functions defined from $\Omega$ to $X$. 
In the particular case of $X=\IR^n$, integration of $\IR^n$-valued function is understood coordinatewise or equivalently in the Bochner sense.  The specification of the norm $\|\cdot\|$ used in $\IR^n$ is not relevant for the results below.

\begin{definition}Consider a probability space $(\Omega,\AA,\mu)$ and a separable Banach space $X$. Let $T:\Omega\rightarrow X$ be a measurable and $L^1$-bounded set-valued map, with compact nonempty values and $\CC\subset \AA$  a sub-$\sigma$-algebra. Denote the set of  measurable  selections of $T$ by  $\SS_T$ and denote the set of their conditional expectations by 

$$\IE(\SS_{T}|\CC)=\{\IE(f|\CC): f\in \SS_T\}.$$

\end{definition}

The following theorem is a bang-bang principle for the conditional expectation operator. 

\begin{theorem}\label{BB}Consider a probability space $(\Omega,\AA,\mu)$, where $\AA$ is a $\mu$-complete $\sigma$-algebra. Let $T:\Omega\rightarrow \IR^n$ be a measurable, $L^1$-bounded set-valued map, with compact convex nonempty values and $\CC\subset \AA$ a sub-$\sigma$-algebra which is $\mu$-coarser than $\AA$. Then, 

$$\IE (\SS_T |\CC)=\IE(\SS_{\ext(T)}|\CC),  \mu\text{-a.e. on } \Omega.$$
\end{theorem}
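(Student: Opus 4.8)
The inclusion $\IE(\SS_{\ext(T)}|\CC)\subseteq \IE(\SS_T|\CC)$ is trivial, since every selection of $\ext(T)$ is a selection of $T$. The whole content is the reverse inclusion: given $f\in\SS_T$, I want to produce $g\in\SS_{\ext(T)}$ with $\IE(g|\CC)=\IE(f|\CC)$ $\mu$-a.e. The natural strategy is a \emph{Lyapunov/Young-measure} argument localized by the sub-$\sigma$-algebra $\CC$: first reduce the set-valued map to a finite-dimensional "template" (e.g.\ by using that $T$ has values in $\IR^n$, so $\ext(T)(\omega)$ has at most some controllable complexity, or by approximating $T$ by measurable maps with finitely many extreme points via a measurable selection theorem), and then invoke Theorem~\ref{theorem-induction}: for every sub-$\sigma$-algebra $\CC$ that is $\mu$-coarser than $\AA$ and every $h=(h_1,\dots,h_n)\in L^1(\Omega,\AA,\mu)^n$, the vector measure $hG_\CC$ is Lyapunov. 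The Lyapunov property is exactly what lets one "purify" a $\CC$-measurable convex combination into an extreme selection without changing the conditional expectation.

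More concretely, I would proceed as follows. Since $T$ is compact-convex-valued in $\IR^n$, Carath\'eodory's theorem lets me write any selection $f(\omega)$ as a convex combination of at most $n+1$ extreme points of $T(\omega)$; by a measurable selection argument (Castaing representation of $T$ and measurable choice of the Carath\'eodory decomposition — this is where separability and measurability of $T$ are used) I obtain measurable maps $e_0,\dots,e_n:\Omega\to\IR^n$ with $e_i(\omega)\in\ext(T)(\omega)$ and measurable weights $\lambda_i\ge 0$, $\sum_i\lambda_i=1$, such that $f=\sum_{i=0}^n\lambda_i e_i$. Then I consider the $\IR^{n(n+1)}$-valued integrands $(\lambda_i e_i)_{i=0}^n$ and apply the Lyapunov property of the associated conditional-expectation vector measure on $\AA$ (with respect to $\CC$). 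Via the description $G(\AA_E)=\{\int_E \psi\,dG:\psi:E\to[0,1]\text{ measurable}\}$ recalled after the definition of Lyapunov measure, one upgrades from $[0,1]$-valued to a finite "bang-bang" exhaustion: there is a measurable partition or, equivalently, a measurable selection function $\sigma:\Omega\to\{0,\dots,n\}$ such that $g:=e_{\sigma(\cdot)}$ satisfies $\IE(g\chi_A|\CC)=\IE(f\chi_A|\CC)$ for all $A$, i.e.\ $\IE(g|\CC)=\IE(f|\CC)$ $\mu$-a.e. Since $g(\omega)=e_{\sigma(\omega)}(\omega)\in\ext(T)(\omega)$, $g\in\SS_{\ext(T)}$, and we are done. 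The $\mu$-completeness of $\AA$ is needed to make the measurable selections genuinely $\AA$-measurable after modification on null sets.

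**Main obstacle.** The analytic heart — that the conditional-expectation vector measure is Lyapunov — is already granted by Theorem~\ref{theorem-induction}, so the real difficulty is the \emph{measurable selection machinery}: producing a jointly measurable Carath\'eodory decomposition $f=\sum\lambda_i e_i$ with each $e_i$ a measurable selection of $\ext(T)$, and then extracting from the Lyapunov range description a single measurable "switching" function $\sigma$ that realizes the extreme selection while preserving the conditional expectation \emph{pointwise a.e.}, not merely in distribution. This requires some care: one typically first establishes the conclusion for $T$ with \emph{finitely many} extreme points (so that $\ext(T)=\{e_0,\dots,e_m\}$ is a finite measurable family and the problem becomes choosing $\CC$-relative weights that are $0$--$1$ valued, handled directly by the Lyapunov property applied to the $m+1$ integrands $(e_i)$ — here Theorem~\ref{TH4} in the Appendix, which extracts $g$ with $\int_E g\,dF=0$, is the exact tool), and then passes to the general convex-valued case by a limiting/exhaustion argument using $L^1$-boundedness and the Kuratowski--Ryll-Nardzewski selection theorem to approximate $T$ from inside by maps with finitely many extreme points. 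Controlling this approximation so that the conditional expectations converge appropriately, and assembling the pieces into an exact (not approximate) identity, is the technically delicate part; I expect the proof to spend most of its length there, with the Lyapunov input invoked as a black box.
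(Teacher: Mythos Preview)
Your plan is essentially the paper's proof: take $f\in\SS_T$, decompose it as a measurable convex combination $f=\sum_{i=1}^{n+1}\alpha_i h_i$ with $h_i\in\SS_{\ext(T)}$, apply Theorem~\ref{theorem-induction} to the vector measure $E\mapsto \IE[(h_1,\dots,h_{n+1})\chi_E\mid\CC]$, and use the Lyapunov property to replace the weights $(\alpha_i)$ by indicators of a measurable partition $(B_i)$ without changing the conditional expectation; then $g:=\sum_i \chi_{B_i}h_i$ is the required selection of $\ext(T)$.

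Where you diverge from the paper is in your assessment of the ``main obstacle.'' You anticipate having to first treat maps with finitely many extreme points and then pass to the general case by approximation; the paper does \emph{no} such limiting step. The measurable Carath\'eodory decomposition you describe is available in one shot for arbitrary compact convex $T(\omega)\subset\IR^n$: this is Theorem~IV.11 of Castaing--Valadier (recorded in the Appendix as Theorem~\ref{FINSEL}), which delivers exactly $n+1$ measurable selections $h_i$ of $\ext(T)$ and measurable weights $\alpha_i$ with $f=\sum\alpha_i h_i$, for any measurable $L^1$-bounded compact-convex-valued $T$. Once that decomposition is in hand, the passage from $[0,1]$-valued weights to a genuine partition is handled by Lemma~\ref{PLY} (a direct consequence of the Lyapunov range description you quote), applied once --- no exhaustion, no convergence control. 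So the proof is shorter and more direct than you fear: the two black boxes Theorem~\ref{FINSEL} and Lemma~\ref{PLY}, together with Theorem~\ref{theorem-induction}, finish it in a few lines.
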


\begin{proof}
It is obvious that $\IE(\SS_{\ext(T)}|\CC)\subset\IE (\SS_T |\CC), \mu\text{-a.e. on } \Omega$. Let us consider the converse inclusion. Let $h\in \SS_T$. From Theorem \ref{FINSEL} (in the Appendix), there is $h_i,\; \alpha_i,\; i\in I=\{1,...,n+1\}$ such that $h_i$ is a measurable selection of $\ext(T)$ for every $i\in I$ and the weight functions $\alpha_i : \Omega\rightarrow [0,\; 1],i\in I,$ are measurable and satisfy $\sum_{i\in I} \alpha_i=1$. 
Furthermore, $h=\sum_{i\in I}\alpha_ih_i$. Since $T$ is $L^1$-bounded, the functions $h_i,i\in I,$ are integrable. 

\vspace{0,3cm}
From now, we work with equivalence classes, and $\mu$-a.e. equal functions are confused.
We prove thereafter that there exists a finite measurable partition $B_i,i\in I,$ of $\Omega$ satisfying 
\begin{equation}\label{EQOMEXT}
\IE(h|\CC)=\underset{i\in I}{\sum}\IE(\alpha_i h_i|\CC)=\underset{i\in I}{\sum} \IE(\chi_{B_i}h_i|\CC), \mu\text{-a.e. on }\Omega. 
\end{equation}
Consider the vector measure $G$ defined on $\AA$ by $$G(E)=\IE[(h_1,...,h_{n+1})\chi_E| \CC].$$

From Theorem \ref{theorem-induction}, $G$ is Lyapunov. Hence, from Lemma \ref{PLY} (in the Appendix) there exists a measurable partition $B_1,....,B_{|I|}$ of $\Omega$ such that, for every $i\in I$, 

$$G(B_i)=\int_{\Omega} \alpha_i d G.$$
That is, for every $i,j\in I$, 

$$\IE(\chi_{B_i}h_j|\CC)=\IE(\alpha_i h_j|\CC), \mu\text{-a.e. on }\Omega.$$

By summing up the diagonal elements (over the same indices $i=j$), we obtain,

\begin{equation*}
\IE(h|\CC)=\underset{i\in I}{\sum}\IE(\alpha_i h_i|\CC)=\underset{i\in I}{\sum} \IE(\chi_{B_i}h_i|\CC), \mu\text{-a.e. on }\Omega.
\end{equation*}

This proves Equation \eqref{EQOMEXT}.  

Define on $\Omega$ the function $f$ by setting $f_{|B_i}=h_i$, for every $i\in I$. We obtain a measurable function $f :\Omega\rightarrow \IR^n$ satisfying

$$\IE(f|\CC)=\IE(h|\CC), \mu\text{-a.e. on }\Omega.$$
Furthermore, by construction, $f(\omega)\in \ext(T(\omega)),$ $\mu$-a.e. on $\Omega$. 
\end{proof}

Theorem 5 in \cite{HeS18b} establishes that if $M:\Omega \rightarrow \mathbb{R}^n$ is a set-valued map satisfying assumptions of Theorem \ref{BB} except the convexity of its values, then  $\IE (\SS_M |\CC)=\IE(\SS_{\co(M)}|\CC)$.  Since $T$ has convex compact values (in Theorem \ref{BB}),  $T=\co(\ext(T))$. So, we are tempted to write $\IE (\SS_T |\CC)=\IE(\SS_{\co(\ext(T))}|\CC)=\IE(\SS_{\ext(T)}|\CC)$. But this conclusion is not correct since the set of extreme points is not necessarily closed, and the set-valued map $\ext(T)$ does not satisfy necessarily all the requirements of Theorem 5 in \cite{HeS18b}. That is Theorem \ref{BB} cannot be deduced straightforwardly from this result.  On the contrary,   Theorem 5 in \cite{HeS18b} can be deduced from Theorem \ref{BB}.

\begin{corollary}[\cite{HeS18b}]
Apart from the convexity of values of $T$, suppose that the assumptions of Theorem 3 are satisfied. Then,
$$\IE (\SS_T |\CC)=\IE(\SS_{\co(T)}|\CC).$$
\end{corollary}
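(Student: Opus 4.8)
The plan is to deduce the corollary from Theorem~\ref{BB} applied not to $T$ itself but to its convexification $\co(T):\omega\mapsto\co(T(\omega))$. One inclusion is immediate: since $T(\omega)\subset\co(T(\omega))$ for every $\omega$, every measurable selection of $T$ is a measurable selection of $\co(T)$, so $\SS_T\subset\SS_{\co(T)}$ and hence $\IE(\SS_T|\CC)\subset\IE(\SS_{\co(T)}|\CC)$, $\mu$-a.e. on $\Omega$. The work is entirely in the reverse inclusion.

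First I would verify that $\co(T)$ satisfies all the hypotheses of Theorem~\ref{BB}. Its values are nonempty and convex by construction, and compact by Carath\'eodory's theorem (the convex hull of a compact subset of $\IR^n$ is compact); the $L^1$-bound $\varphi$ of $T$ bounds $\co(T)$ as well, because any convex combination $\sum_i\lambda_i x_i$ of points $x_i\in T(\omega)$ satisfies $\|\sum_i\lambda_i x_i\|\le\sum_i\lambda_i\|x_i\|\le\varphi(\omega)$. The only point needing an argument is the measurability of $\co(T)$: using Carath\'eodory one writes $\co(T(\omega))=\{\sum_{i=1}^{n+1}\lambda_i x_i:\ \lambda\in\{t\in[0,1]^{n+1}:\sum_j t_j=1\},\ x_i\in T(\omega)\}$, which exhibits $\co(T)$ as the image of the measurable compact-valued map $\omega\mapsto\{t\in[0,1]^{n+1}:\sum_j t_j=1\}\times T(\omega)^{n+1}$ under the continuous map $(\lambda,x_1,\dots,x_{n+1})\mapsto\sum_i\lambda_i x_i$; measurability is preserved under such images, a classical fact for $\IR^n$-valued set-valued maps.

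Granting this, Theorem~\ref{BB} gives $\IE(\SS_{\co(T)}|\CC)=\IE(\SS_{\ext(\co(T))}|\CC)$, $\mu$-a.e. on $\Omega$. I would then invoke the elementary geometric fact that for a compact set $K\subset\IR^n$ one has $\ext(\co(K))\subset K$ (Milman's partial converse to Krein--Milman, applicable since $\co(K)$ is compact; in finite dimension it also follows directly from Carath\'eodory and the definition of an extreme point). Thus $\ext(\co(T(\omega)))\subset T(\omega)$ for every $\omega$, so every measurable selection of $\ext(\co(T))$ is a measurable selection of $T$, i.e.\ $\SS_{\ext(\co(T))}\subset\SS_T$ and $\IE(\SS_{\ext(\co(T))}|\CC)\subset\IE(\SS_T|\CC)$. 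Chaining this with the displayed equality yields $\IE(\SS_{\co(T)}|\CC)\subset\IE(\SS_T|\CC)$, which together with the trivial inclusion from the first paragraph gives the claimed equality.

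I do not expect a serious obstacle here: the substantive content has already been isolated in Theorem~\ref{BB}, and the remaining ingredients — the trivial inclusion, the stability of measurability, compactness and $L^1$-boundedness under taking convex hulls in $\IR^n$, and $\ext(\co(K))\subset K$ — are all standard. If anything, the most delicate point is making the measurability of $\co(T)$ fully rigorous, but this is routine in finite dimension.
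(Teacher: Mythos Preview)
Your proof is correct and follows essentially the same route as the paper: apply Theorem~\ref{BB} to $\co(T)$ after checking it inherits measurability, compact convex values and $L^1$-boundedness, then use $\ext(\co(T(\omega)))\subset T(\omega)$ to close the nontrivial inclusion. The paper simply cites \citet{CaV77}, Theorem III.40, for the measurability of $\co(T)$ where you give a direct Carath\'eodory argument, but the logical structure is identical.
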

\begin{proof} The set-valued map $\omega\mapsto \co(T(\omega))$ is measurable (\citet{CaV77}, Theorem III.40), with convex compact values. It is obviously $L^1$-bounded as well. Hence, from Theorem \ref{BB}, $\IE(\SS_{\co(T)}|\CC)=\IE(\SS_{\ext(\co(T))}|\CC)$. 

But, since $T$ has compact values, we have $\ext(\co(T(\omega)))\subset T(\omega)$, for every $\omega\in \Omega$, and hence $\IE(\SS_{\ext(\co(T))}|\CC)\subset \IE(\SS_{T}|\CC)$. That is, $\IE(\SS_{\co(T)}|\CC)\subset \IE (\SS_T |\CC)$.
The converse inclusion being obvious, we deduce that $\IE (\SS_T |\CC)=\IE(\SS_{\co(T)}|\CC)$.
\end{proof}

\begin{remark}Consider a probability space $(\Omega,\AA,\mu)$, where $\AA$ is a $\mu$-complete $\sigma$-algebra, and a separable Banach space $X$. Let $T:\Omega\rightarrow X$ be a measurable set-valued map, with compact  
nonempty values and $\CC\subset \AA$ a sub-$\sigma$-algebra. 
\begin{itemize}
\item Denote and define the set of  $\CC$-measurable and integrable $\mu$-a.e. selections of $T$ by  

$$\SS^1_{T}(\CC)=\{f\in L^1(\mu,X): f \text{ is } \CC-\text{measurable and } f(\omega)\in T(\omega), \mu-\textit{a.e.}\}.$$
Denoted simply by $\SS^1_{T}$ when $\CC=\AA$.

\item The conditional expectation of  $\SS^1_{T}$ is defined by $$\IE (\SS^1_T |\CC)=\{\IE (f |\CC) : f\in \SS^1_T\}.$$
\end{itemize}

Following Hiai and Umegaki (\cite{HiU77}, Theorem 5.1), for a measurable $L^1$-bounded  set-valued map $T$, there is a unique set-valued map $\IE(T|\CC)$, call it  \emph{conditional expectation} of $T$ satisfying

$$\SS^1_{\IE(T|\CC)}(\CC)=\cl(\IE (\SS^1_T|\CC)),$$
where the closure is taken with respect to the norm in $L^1(\mu,X)$.

Since  $T$ is assumed to be $L^1$-bounded in Theorem \ref{BB}, the set of measurable selections $\SS_T$ of $T$ coincides with the set of measurable and integrable $\mu$-a.e. selections $\SS^1_T$ of $T$. This remains also true for $\SS^1_{\ext(T)}$.  
Therefore, the previous theorem establishes, under its assumptions, that  
\begin{equation}\label{EspConEq}
\SS^1_{\IE(T|\CC)}(\CC)=\cl(\IE (\SS^1_T |\CC))=\cl(\IE(\SS^1_{\ext(T)}|\CC)),   \mu\text{-a.e. on } \Omega.
\end{equation}

But, since $T$ has convex compact values, $\SS^1_{T}$ is clearly convex and norm closed. Then, it is weakly closed. Furthermore, the $L^1$-boundedness of $T$ implies the $L^1$-boundedness of $\SS^1_{T}$ then its uniform integrability\footnote{Recall that a subset $\HH\subset L^1(\mu,\IR^n)$,  is said to be uniformly integrable iff $\underset{M\rightarrow +\infty}{\lim}\int_{\|f\|>M} \|f\|d\mu=0,\text{ uniformly in }f\in \HH.$ 
Equivalently, $\HH$ is bounded in $ L^1(\mu,\IR^n)$ and $\underset{\mu(E)\rightarrow 0}{\lim}\int_E \|f\|d\mu=0$, uniformly in $f\in \HH$.}. Consequently,  $\SS^1_{T}$ is weakly compact which implies that the convex set $\IE(\SS^1_{T}|\CC)$ is weakly compact, so norm closed,  as $\IE(.|\CC)$ is continuous for the weak  topologies.  By noticing that  $\IE(\SS^1_T |\CC))=\IE(\SS^1_{\ext(T)}|\CC)$, Equation \eqref{EspConEq} becomes
$$\SS^1_{\IE(T|\CC)}(\CC)=\IE (\SS^1_T |\CC)=\IE(\SS^1_{\ext(T)}|\CC),  \mu\text{-a.e. on } \Omega.$$

\end{remark}

If $\mu$ is nonatomic, then $\CC=\{\emptyset,\Omega\}$ is $\mu$-coarser than $\AA$ as already mentioned in section \ref{SCEVM}. In this particular case, Theorem  \ref{BB} reduces to the following well-known result (\cite{KhS14}, Theorem 3.1). 
\begin{corollary}
Consider a probability space $(\Omega,\AA,\mu)$, where $\AA$ is a $\mu$-complete $\sigma$-algebra. Let $T:\Omega\rightarrow \IR^n$ be a measurable, $L^1$-bounded set-valued map, with compact convex nonempty values. Assume that $\mu$ is nonatomic. Then, 
$$\int_\Omega T d\mu=\int_\Omega \ext(T)d\mu.$$ 
\end{corollary}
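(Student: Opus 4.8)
The plan is to recognize this corollary as the specialization of Theorem \ref{BB} to the trivial sub-$\sigma$-algebra. First I would recall the observation already made in Section \ref{SCEVM}: nonatomicity of $\mu$ is equivalent to $\CC:=\{\emptyset,\Omega\}$ being $\mu$-coarser than $\AA$. Since $T$ is measurable, $L^1$-bounded, with compact convex nonempty values, and $\AA$ is $\mu$-complete, all the hypotheses of Theorem \ref{BB} are met with this particular choice of $\CC$, so it yields $\IE(\SS_T|\CC)=\IE(\SS_{\ext(T)}|\CC)$, $\mu$-a.e. on $\Omega$.

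The second step is to identify the two sides. For $\CC=\{\emptyset,\Omega\}$ and any $f\in L^1(\mu,\IR^n)$, the conditional expectation $\IE(f|\CC)$ is the ($\mu$-a.e.) constant function whose value is $\int_\Omega f\,d\mu$; $\CC$-measurable functions are exactly the $\mu$-a.e. constant ones. Hence, under the canonical identification of constant functions on $\Omega$ with vectors in $\IR^n$, one has
$$\IE(\SS_T|\CC)=\Big\{\int_\Omega f\,d\mu : f\in\SS_T\Big\}=\int_\Omega T\,d\mu,$$
and, identically, $\IE(\SS_{\ext(T)}|\CC)=\int_\Omega \ext(T)\,d\mu$; here I use that $L^1$-boundedness makes $\SS_T$ coincide with the set of measurable selections integrated in Theorem \ref{BB}, and likewise for $\ext(T)$. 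Combining this with the equality from Theorem \ref{BB} gives $\int_\Omega T\,d\mu=\int_\Omega \ext(T)\,d\mu$, where the ``$\mu$-a.e.'' qualifier is now vacuous since both sides are honest subsets of $\IR^n$.

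There is no real obstacle here: the corollary is a direct reading-off of Theorem \ref{BB}. The only point deserving a word is the translation between the (a.e.\ defined) conditional-expectation sets and the integral sets $\int_\Omega T\,d\mu$, i.e.\ that a $\{\emptyset,\Omega\}$-measurable selection is a.e.\ constant and that this constant is the integral — which is immediate. For completeness one may also mention that the inclusion $\int_\Omega\ext(T)\,d\mu\subset\int_\Omega T\,d\mu$ is trivial, so only the reverse inclusion uses the Lyapunov/bang-bang machinery.
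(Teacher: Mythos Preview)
Your proposal is correct and matches the paper's own treatment: the paper does not give a separate proof but simply notes (in the sentence preceding the corollary) that nonatomicity of $\mu$ means $\CC=\{\emptyset,\Omega\}$ is $\mu$-coarser than $\AA$, so Theorem~\ref{BB} specializes to the stated identity. Your additional remarks spelling out the identification $\IE(f|\{\emptyset,\Omega\})=\int_\Omega f\,d\mu$ are accurate and make explicit what the paper leaves implicit.
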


Based on the bang-bang principle (Theorem \ref{BB}), we are going to deduce a purification principle analogous to that established by He and Sun  (\cite{HeS14}, Proposition 2 and\cite{HeS18b}, Theorem 7). Let $(\Omega,\AA,\mu)$ be a probability space, where $\AA$ is a $\mu$-complete $\sigma$-algebra, and $A$ be a Polish space. Denote by $\PP(A)$ the set of Borel probability measures on $A$. Consider the space of Young measures\footnote{More generally the space of measures on $\Omega\times A$ having marginals on $\Omega$ equal to $\mu$ is referred to as the space of Young measures. We define, here, the space of disintegrated Young measures. However, under our assumptions (metrizability of $A$), all measure on the product having its marginal on $\Omega$ equal to $\mu$ can be disintegrated, see Theorem 3.2, p. 179 in \citep{FlG12} and \citep{VAL73}. Hence, we can consider that we do not modify the definition of this space.}  $\RR(\Omega,A)$ with respect to $(\Omega,\AA,\mu)$ and $A$: the set of functions $\delta : \Omega\rightarrow \PP(A)$ such that $\omega\mapsto \delta_\omega(B)$ is measurable, for every $B\in \BB(A)$. Endow $\RR(\Omega,A)$ with the topology of stable convergence\footnote{Since $A$ is metrizable Suslin, we do not make distinction between the different topologies known on $\RR(\Omega,A)$ : the $S, M, N, W$ defined in \citep{CRV04}, because they are identical, see the Portmanteau Theorem in \citep{CRV04}.}, \emph{i.e.,} the coarsest topology making continuous the functions $\delta \mapsto \int_\Omega \left[\int_{A}\phi d \delta_\omega\right] d\mu$, where $\phi :\Omega\times A\rightarrow \IR$ is a Carath\'eodory integrand ($\phi(\cdot, a)$ is measurable for every $a\in A$ and $\phi(\omega,\cdot)$ is continuous on $A$ for every $\omega\in \Omega$) which is $L^1$-bounded,  \emph{i.e.}, there exists a $\mu$-integrable function $h :\Omega\rightarrow \IR_+$ such that $|\phi (\omega,x)|\leq h(\omega)$, for every $(\omega,x)\in \Omega\times A$. Following (\citet{VAL90}, Lemma A2), the measurability condition defining Young measures is equivalent to the measurability of the functions $\omega \mapsto \delta_\omega (\phi)=\int_{A} \phi d\delta_\omega$, $\phi\in \CF_b(A)$, where $ \CF_b(A)$ is the set of continuous and bounded functions defined on $A$. We associate to every measurable function $f:\Omega\rightarrow A$, a (degenerate) Dirac-Young measures $\delta^f$, defined at every $\omega\in \Omega$ by $\delta^f_\omega(B)=1$ if $f(\omega)\in B$ and $\delta^f_\omega(B)=0$ otherwise. Denote by $D(\Omega,A)$ the set of Dirac-Young measures.

Given $V:\Omega\times A\rightarrow \IR^n$ and $\delta\in \RR(\Omega, A)$ denote by $\delta.V$ the function defined on $\Omega$ by $$\delta.V(\omega)=\int_A V(\omega,a) d\delta_\omega(a),$$

provided that the integral is well defined. Denote $f.V$ the similarly defined function for a measurable function $f:\Omega\rightarrow A$. That is, $f.V(\omega)=V(\omega,f(\omega)).$

\begin{corollary}\label{THMP}Let $(\Omega,\AA,\mu)$ be a probability space, where $\AA$ is a $\mu$-complete $\sigma$-algebra and $A$ a compact metric space and $n\in \IN^*$. Consider  a function $V:\Omega\times A\rightarrow \IR^n$ such that $V(\cdot,a)$ is measurable, for every $a\in A$, and $V(\omega,\cdot)$ is continuous on $A$ for every $\omega\in \Omega$, that is $V=(V_1,...,V_n)$ is a vector consisting of $n$ Carath\'eodory integrands. Assume further that $V_i$ is $L^1$-bounded, for every $i$, and $\CC$ is $\mu$-coarser than $\AA$. 
Then, for every $\delta\in \RR(\Omega,A)$, there exists a measurable function $f:\Omega\rightarrow A$ such that 
\begin{itemize}
\item $\IE( \delta.V |\CC)=\IE(f.V|\CC)$, and
\item $f(\omega)\in \supp(\delta_\omega),$ for $\mu$-a.e. $\omega\in \Omega$. 
\end{itemize}
\end{corollary}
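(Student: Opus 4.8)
The plan is to reduce the statement to the bang-bang principle of Theorem \ref{BB} by encoding $\delta.V$ as a measurable selection of an appropriate compact convex valued multifunction, and then to recover the purified control $f$ by a measurable selection argument.

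First I would introduce the set-valued map $T:\Omega\rightarrow\IR^n$ defined by $T(\omega)=\co\bigl(V(\omega,\supp(\delta_\omega))\bigr)$, and check that it satisfies the hypotheses of Theorem \ref{BB}. Since $A$ is compact metric, $\supp(\delta_\omega)$ is a nonempty compact subset of $A$, and $\omega\mapsto\supp(\delta_\omega)$ is a measurable closed-valued multifunction (a standard fact for Young measures, see \cite{CRV04}). Because $V(\omega,\cdot)$ is continuous, $V(\omega,\supp(\delta_\omega))$ is a nonempty compact subset of $\IR^n$, and $\omega\mapsto V(\omega,\supp(\delta_\omega))$ is measurable as the image of a measurable multifunction under a Carath\'eodory map (\cite{CaV77}, Theorem III.40). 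Taking convex hulls preserves measurability (\cite{CaV77}, Theorem III.40), and in $\IR^n$ the convex hull of a compact set is compact; hence $T$ is measurable with nonempty compact convex values. Finally $T$ is $L^1$-bounded since $\|V(\omega,a)\|\leq\sum_{i=1}^n|V_i(\omega,a)|\leq\sum_{i=1}^n h_i(\omega)$, where $h_i$ is an integrable bound for $V_i$. Thus all assumptions of Theorem \ref{BB} hold for $T$ and $\CC$.

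Next I would observe that $\delta.V\in\SS_T$. Measurability of $\delta.V=(\delta.V_1,\dots,\delta.V_n)$ follows from the definition of $\RR(\Omega,A)$ together with the fact that the $V_i$ are $L^1$-bounded Carath\'eodory integrands. For each $\omega$, $\delta.V(\omega)=\int_A V(\omega,a)\,d\delta_\omega(a)$ is the barycenter of the image of $\delta_\omega$ under the continuous map $V(\omega,\cdot)$; this image measure is carried by the compact set $V(\omega,\supp(\delta_\omega))$, so its barycenter lies in $\co\bigl(V(\omega,\supp(\delta_\omega))\bigr)=T(\omega)$. Hence $\IE(\delta.V|\CC)\in\IE(\SS_T|\CC)$, and by Theorem \ref{BB} there is $g\in\SS_{\ext(T)}$ with $\IE(g|\CC)=\IE(\delta.V|\CC)$ $\mu$-a.e. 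Moreover, for $\mu$-a.e. $\omega$ one has $g(\omega)\in\ext(T(\omega))\subset V(\omega,\supp(\delta_\omega))$, because an extreme point of the convex hull of a compact subset of $\IR^n$ necessarily belongs to that subset.

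Finally I would recover $f$ by selecting from $\Phi(\omega)=\{a\in\supp(\delta_\omega):V(\omega,a)=g(\omega)\}$. On a set $\Omega_0\in\AA$ of full measure, $\Phi$ has nonempty values (by the previous step) and closed values (since $V(\omega,\cdot)$ is continuous and $\supp(\delta_\omega)$ is closed). Its graph is the intersection of the graph of $\omega\mapsto\supp(\delta_\omega)$ with the set $\{(\omega,a):V(\omega,a)-g(\omega)=0\}$; both lie in $\AA\otimes\BB(A)$, the former because a measurable closed-valued multifunction into a Polish space has measurable graph (and $\AA$ is $\mu$-complete), the latter because $(\omega,a)\mapsto V(\omega,a)-g(\omega)$ is a Carath\'eodory, hence jointly measurable, map. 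Since $\AA$ is $\mu$-complete and $A$ is Polish, the von Neumann--Aumann selection theorem (\cite{CaV77}) yields a measurable $f:\Omega_0\rightarrow A$ with $f(\omega)\in\Phi(\omega)$; extend $f$ arbitrarily and measurably on $\Omega\setminus\Omega_0$. Then $f(\omega)\in\supp(\delta_\omega)$ and $f.V(\omega)=V(\omega,f(\omega))=g(\omega)$ $\mu$-a.e., so $\IE(f.V|\CC)=\IE(g|\CC)=\IE(\delta.V|\CC)$ $\mu$-a.e., which is the claim.

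The conceptual content is entirely contained in Theorem \ref{BB}; the remaining work is bookkeeping of measurability. The points I expect to require the most care are the measurability of $\omega\mapsto\supp(\delta_\omega)$ and of $T$, and the verification that the selection multifunction $\Phi$ has an $\AA\otimes\BB(A)$-measurable graph so that a measurable selection theorem applies — in particular, handling the intersection correctly is precisely why one passes through graph-measurability and uses the $\mu$-completeness of $\AA$.
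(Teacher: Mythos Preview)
Your proposal is correct and follows essentially the same route as the paper: define $T(\omega)=\co(V(\omega,\supp(\delta_\omega)))$, verify its hypotheses, note $\delta.V\in\SS_T$ via the barycenter/mean-value argument, apply Theorem \ref{BB} to obtain a selection of $\ext(T)\subset V(\cdot,\supp(\delta_\cdot))$, and then lift back to $A$ by a measurable selection. The only cosmetic differences are that the paper proves measurability of $\omega\mapsto V(\omega,\supp(\delta_\omega))$ via a Castaing representation rather than by citing a composition result, and recovers $f$ through the implicit measurable selection theorem (\cite{CaV77}, Theorem III.38) rather than via graph-measurability plus von Neumann--Aumann; both pairs of arguments are standard and interchangeable.
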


\begin{proof}In this proof, some arguments dealing mainly with measurability considerations are taken from \cite{ASK19}. 

\vspace{0,3cm}

Since $\delta_\omega$ has a compact support and $V(\omega,\cdot)$ is continuous, for every $\omega\in \Omega$~: 
\begin{equation}\label{IMCO}
\co(V(\omega,\supp(\delta_\omega)))\text{ is compact for every }\omega\in \Omega.
\end{equation}

Fix a function $\bar{\delta} : \Omega\rightarrow \PP(A)$ representing the $\mu$-a.e. equal equivalence class of $\delta$.

Observe now that the set-valued map $\omega\mapsto \supp(\bar\delta_\omega)$ is measurable. Indeed, for every open set $U$ of $A$, 
$\{\omega\in \Omega : \supp(\bar\delta_\omega)\cap U\neq\emptyset\}=\{\omega\in \Omega : \bar\delta_\omega(U)>0\}$.
Hence, by definition of $\RR(\Omega,A)$,  
$\omega\mapsto \supp(\bar\delta_\omega)$ is measurable.

\vspace{0,3cm}
Consider the function $\bar p:\Omega \rightarrow \IR^n$ defined for every $\omega\in \Omega$ by 

$$\bar{p}(\omega)=\int_A V(\omega,a) d\bar\delta_\omega(a).$$

Let us check that $\bar{p}$ is measurable. For every fixed $\omega\in \Omega, V(\omega,\cdot)$ is continuous from the compact metric space $A$ into $\IR^n$,  hence, $\int_A V(\omega,a) d\bar\delta_\omega(a)$ is well defined.  The function $\omega\mapsto V(\omega,\cdot)$ is $\AA$-$\BB(\CF(A))$ measurable (\citet{AlB06}, Theorem 4.55, p. 155), where $ \CF(A)$ is the set of continuous functions defined on $A$. 
Then, since $\bar\delta$ is $\AA$-weak* measurable, the function $\omega\mapsto \int_A V(\omega,a) d\bar\delta_\omega(a)=\bar{p}(\omega)$ is measurable as a composition of measurable functions : $\omega\mapsto (V(\omega,\cdot),\bar\delta_\omega)$ from $\Omega$ to $\CF(A)\times \PP(A)$ obviously measurable and ($\varphi, q)\mapsto \int_A \varphi d q$ from $\CF(A)\times \PP(A)$ to $\IR$ which is continuous for the product of the norm by the weak* topology, then measurable. Hence $\bar{p}$ is  measurable.   
From the mean value theorem for the integral of vector valued functions (for instance \citet{DiU77}, Corollary 8, p. 48) and using \eqref{IMCO}, 
\begin{equation}\label{CCH}
\bar{p}(\omega)\in \co(V(\omega,\supp(\bar{\delta}_\omega))),\text{ for }\text{ every }\omega\in \Omega.  
\end{equation}
Remark now that $\omega\mapsto V(\omega,\supp(\bar\delta_\omega))$ is measurable. Indeed, since $\omega\mapsto \supp(\bar\delta_\omega)$ is measurable with compact values, it possesses a Castaing representation. That is, for every $\omega\in \Omega$,  $\supp(\bar\delta_\omega)=\cl(\{\sigma_k(\omega):k\in \IN\})$, where $\sigma_n$ are measurable functions from $\Omega$ to $A$, see (\citet{CaV77}, Theorem III.7, p. 66). But, from the continuity of $V(\omega,\cdot)$, $\omega\in \Omega$, $V(\omega,\supp(\bar\delta_\omega))=\cl(V(\omega,\{\sigma_k(\omega):k\in \IN\}))$ for every $\omega\in \Omega$. Clearly, $\omega\mapsto V(\omega,\{\sigma_k(\omega):k\in \IN\})$ is measurable as a countable union of measurable functions. Hence, $\omega\mapsto V(\omega,\supp(\bar\delta_\omega))$ is measurable (\citet{CaV77}, Theorem III.4). Therefore, $\omega\mapsto \co (V(\omega,\supp(\bar\delta_\omega)))$ is measurable, (\citet{CaV77}, Theorem III.40).  

Since $V(\cdot,\supp(\bar\delta_\cdot))$ has compact values, it results from Theorem \ref{BB} that there is a measurable selection $h$ of $\ext(\co(V(\cdot,\supp(\bar\delta_\cdot)))) \subset V(\cdot,\supp(\bar\delta_\cdot))$  such that
$$\IE(\delta.V|\CC)=\IE( \bar{p}(\cdot)|\CC)=\IE(h|\CC), \mu\text{-a.e. on }\Omega.$$

An application of the implicit measurable selection theorem (\citet{CaV77}, Theorem III.38) allows to find a measurable selection\footnote{Theorem III.38 in \citet{CaV77} establishes $\hat{\AA}$ measurable selections, where $\hat{\AA}$ is the universal completion of $\AA$. But, since $\AA$ is assumed to be $\mu$-complete, these selections are necessarily $\AA$-measurable.} $f$ of $\omega\mapsto \supp(\bar\delta_\omega)$, defined on $\Omega$, such that $h(\omega)=V(\omega,f(\omega))$ for every $\omega\in \Omega$.  Therefore, 

$$\IE(\delta.V|\CC)=\IE(f.V|\CC), \mu\text{-a.e. on }\Omega.$$

Furthermore, by construction, $f(\omega)\in \supp(\delta_\omega),$ $\mu$-a.e. on $\Omega$. 

\end{proof}

\begin{remark}
Corollary \ref{THMP} is obtained by He and Sun (\cite{HeS18b}, Theorem 7). A different version is obtained in (\cite{HeS14}, Proposition 2, p. 134), where a $\CC$-measurability restriction for $\delta\in\RR(\Omega,A)$ is assumed but the space $A$ is a general Polish Space. It is obtained in a more general setting in \cite{JaN19}.
\end{remark}

An immediate density result can be inferred from the previous corollary.  Endow $\RR(\Omega,A)$ by the following topology $\TT$ described by: a net $(\delta_\lambda)_{\lambda\in \Lambda}$ converges to $\delta$ iff for every $L^1$-bounded Carath\'eodory integrand $\varphi$ on $\Omega\times A$,
\begin{itemize}
\item[($\TT$)] $\IE( \delta^\lambda.\varphi|\CC)$ converges in $L^1(\Omega,\CC,\mu)$ to $\IE( \delta.\varphi|\CC)$. 
\end{itemize}

\begin{corollary}\label{Cor-density} Assume that $A$ is a compact metric space and  $\CC$ is $\mu$-coarser than $\AA$, then $D(\Omega,A)$ is dense in $\RR(\Omega,A)$ for the topology $\TT$. 
\end{corollary}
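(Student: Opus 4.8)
The plan is to derive the density directly from Corollary \ref{THMP}, the point being that a basic $\TT$-neighborhood involves only finitely many test integrands, which can be packaged into a single vector-valued Carathéodory map.

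First I would unwind the topology $\TT$. By construction $\TT$ is the coarsest topology on $\RR(\Omega,A)$ making each map $\delta\mapsto\IE(\delta.\varphi|\CC)\in L^1(\Omega,\CC,\mu)$ continuous, where $\varphi$ ranges over the $L^1$-bounded Carath\'eodory integrands on $\Omega\times A$. Consequently a base of $\TT$-neighborhoods of a given $\delta\in\RR(\Omega,A)$ is provided by the sets
$$
N(\delta;\varphi_1,\dots,\varphi_m;\varepsilon)=\Bigl\{\delta'\in\RR(\Omega,A):\ \bigl\|\IE(\delta'.\varphi_j|\CC)-\IE(\delta.\varphi_j|\CC)\bigr\|_{L^1(\mu)}<\varepsilon,\ j=1,\dots,m\Bigr\},
$$
with $m\in\IN^*$, $\varepsilon>0$ and $\varphi_1,\dots,\varphi_m$ $L^1$-bounded Carath\'eodory integrands. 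To prove density it therefore suffices to show that every such $N(\delta;\varphi_1,\dots,\varphi_m;\varepsilon)$ contains a Dirac-Young measure.

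Next, fix $\delta$ and $\varphi_1,\dots,\varphi_m$, and set $V=(\varphi_1,\dots,\varphi_m):\Omega\times A\to\IR^m$. Each $\varphi_j(\cdot,a)$ is measurable, each $\varphi_j(\omega,\cdot)$ is continuous on $A$, and each $\varphi_j$ is $L^1$-bounded, so $V$ satisfies the hypotheses of Corollary \ref{THMP} with $n=m$. Applying it produces a measurable $f:\Omega\to A$ with $\IE(\delta.V|\CC)=\IE(f.V|\CC)$ $\mu$-a.e., that is $\IE(\delta.\varphi_j|\CC)=\IE(f.\varphi_j|\CC)$ for every $j$. Since $\delta^f_\omega$ is the Dirac mass at $f(\omega)$, one has $\delta^f.\varphi_j(\omega)=\int_A\varphi_j(\omega,a)\,d\delta^f_\omega(a)=\varphi_j(\omega,f(\omega))=f.\varphi_j(\omega)$, hence $\IE(\delta^f.\varphi_j|\CC)=\IE(\delta.\varphi_j|\CC)$ $\mu$-a.e. for all $j$; thus $\delta^f\in N(\delta;\varphi_1,\dots,\varphi_m;\varepsilon)$ (the defining differences are actually $0$). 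As $\delta$ and the neighborhood were arbitrary, $D(\Omega,A)$ is $\TT$-dense in $\RR(\Omega,A)$.

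I do not expect a genuine obstacle here: the proof is essentially immediate once Corollary \ref{THMP} is in hand. The only points deserving a line of care are that a $\TT$-basic neighborhood is determined by finitely many integrands — which is precisely what allows them to be assembled into one $\IR^m$-valued Carath\'eodory map — and that equality of the conditional expectations of the vectors $\delta.V$ and $f.V$ is exactly equality component by component, i.e. the statement needed for each scalar test integrand. One may also observe that the construction yields more than density: every $\TT$-neighborhood of $\delta$ contains a Dirac-Young measure that matches $\delta$ exactly on the finitely many integrands defining that neighborhood.
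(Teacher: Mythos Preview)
Your argument is correct and is precisely the intended route: the paper omits the proof, noting only that it ``follows straightforwardly from Corollary~\ref{THMP}'', and your packaging of finitely many test integrands into a single $\IR^m$-valued Carath\'eodory map followed by an application of Corollary~\ref{THMP} is exactly that straightforward derivation.
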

The proof is omitted, since the result follows straightforwardly from Corollary \ref{THMP}.

An interesting particular case is that of $\CC=\{\emptyset,\Omega\}$, where  $\IE(f|\CC)=\int_\Omega f d\mu$,  the topology $\TT$ is not else but the Stable topology. 
Therefore, if $\mu$ is nonatomic, which also means that $\CC=\{\emptyset,\Omega\}$ is $\mu$-coarser than $\AA$, we recover by Corollary \ref{Cor-density}  the well-known denseness result of $D(\Omega,A)$ in $\RR(\Omega, A)$ with respect to the Stable topology \cite{YOU37}. Refer to  \cite{CRV04} (Theorem 2.2.3, p. 40) for a more general version.

\section{Appendix}\label{SAP}
We recall in this section some results used in a relevant way in this paper.  

\begin{theorem}\label{TH4} [\citet{KNO75} or see Theorem 4, p. 263 in \citep{DiU77}] 
Let $G:\AA\rightarrow X$ be a countably additive vector measure and $\mu$ a real positive finite measure on $\AA$ such that for all $E\in \AA$, 
\begin{equation}\label{EQT4}
\mu(E)=0\Leftrightarrow G(E\cap F)=0,\forall F\in \AA.
\end{equation} 
Then, the following assertions are equivalent~:
\begin{itemize}
\item[1)] For all $E\in \AA^+$, the operator $f \mapsto \int_E f dG$ from $L^\infty(E,\mu)$ into $X$ is not a one-to-one.
\item[2)] For all $A\in \AA,$ $G(\AA_{|A})=\{G(F\cap A) :F\in \AA\}$ is convex and weakly compact. 
\end{itemize}
\end{theorem}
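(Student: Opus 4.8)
The plan is to prove the two implications $(2)\Rightarrow(1)$ and $(1)\Rightarrow(2)$ separately, the first being elementary and the second following the extreme‑point argument familiar from proofs of the Lyapunov convexity theorem. Throughout, condition \eqref{EQT4} furnishes the working framework: taking $F=\Omega$ it gives $x^*G\ll\mu$ for every $x^*\in X^*$, so the integral $Tf:=\int_\Omega f\,dG$ is a well‑defined element of $X$ for every $f\in L^\infty(\Omega,\mu)$; it depends only on the $\mu$-class of $f$ (if $f$ vanishes $\mu$-a.e. on a set $N$, then $G$ vanishes on $\AA_N$ by \eqref{EQT4}); and $T$ is continuous from the weak* topology of $L^\infty(\Omega,\mu)=L^1(\Omega,\mu)^*$ to the weak topology of $X$, since its adjoint sends $x^*$ to $d(x^*G)/d\mu\in L^1(\Omega,\mu)$. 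All of this, as well as \eqref{EQT4} itself and the non‑injectivity hypothesis of (1), restrict verbatim to the trace $(E,\AA_E,\mu)$ for any $E\in\AA^+$. With this in hand, $(2)\Rightarrow(1)$ is immediate: fix $E\in\AA^+$; since $G(\emptyset)=0$ and $G(E)$ lie in the convex set $G(\AA_E)$, so does $\tfrac12 G(E)$, whence there is $F\in\AA$, $F\subseteq E$, with $G(F)=\tfrac12 G(E)$ and thus $G(E\setminus F)-G(F)=0$, i.e. $\int_E(\chi_{E\setminus F}-\chi_F)\,dG=0$; as $\chi_{E\setminus F}-\chi_F$ has modulus $1$ at every point of $E$ it is a nonzero element of $L^\infty(E,\mu)$, so $f\mapsto\int_E f\,dG$ is not one‑to‑one.

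For $(1)\Rightarrow(2)$, set $W=\{f\in L^\infty(\Omega,\mu):0\le f\le1\}$; it suffices to prove $T(W)=G(\AA)$. Indeed $W$ is convex and weak* compact, hence $T(W)$ is convex and weakly compact, while $G(\AA)\subseteq T(W)$ because $G(A)=T(\chi_A)$; once the reverse inclusion is established, $G(\AA)$ is convex and weakly compact, and repeating the argument on the trace $(A,\AA_A,\mu)$ (whose hypotheses were checked to hold, and trivially so when $\mu(A)=0$) yields the conclusion for every $G(\AA_A)$. To prove $T(W)\subseteq G(\AA)$, fix $f\in W$ and consider the fiber $K_f=\{g\in W:Tg=Tf\}$, which is a nonempty (it contains $f$), convex, weak* compact subset of $W$; by the Krein--Milman theorem it has an extreme point $g_0$.

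The crux — and the step I expect to be the main obstacle — is to show that $g_0$ is $\{0,1\}$-valued $\mu$-a.e., for then $g_0=\chi_F$ with $F\in\AA$ and $Tf=Tg_0=G(F)\in G(\AA)$, finishing the proof. If this failed, then for some $k\in\IN^*$ the set $E=\{\omega:1/k\le g_0(\omega)\le1-1/k\}$ would satisfy $\mu(E)>0$, i.e. $E\in\AA^+$; by hypothesis (1) the operator $h\mapsto\int_E h\,dG$ on $L^\infty(E,\mu)$ is not one‑to‑one, so it has a nonzero kernel element, which after scaling we may assume satisfies $\|h\|_\infty\le1/k$. Extending $h$ by $0$ off $E$ to $\tilde h\in L^\infty(\Omega,\mu)$, we get $g_0\pm\tilde h\in W$ (the bounds $0\le g_0\pm\tilde h\le1$ hold on $E$ since $1/k\le g_0\le1-1/k$ there, and trivially off $E$) and $T(g_0\pm\tilde h)=Tg_0\pm\int_E h\,dG=Tf$, so $g_0\pm\tilde h\in K_f$; as $\tilde h\ne0$, these are two distinct points of $K_f$ with midpoint $g_0$, contradicting extremality. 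This is the Lindenstrauss‑type argument underlying proofs of Lyapunov's theorem; apart from it, the only care required is the verification that \eqref{EQT4}, the hypothesis of (1), and the weak*--weak continuity of the integration operator descend to trace $\sigma$-algebras, which is routine.
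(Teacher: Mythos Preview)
The paper does not supply its own proof of this theorem: it is quoted in the Appendix as a known result from \cite{KNO75} (see also \cite{DiU77}, Theorem~4, p.~263), so there is nothing to compare your argument against. That said, your proof is correct and is precisely the classical extreme-point (Lindenstrauss-type) argument found in those references: the weak*-to-weak continuity of $f\mapsto\int f\,dG$ yields weak compactness and convexity of $T(W)$, and the non-injectivity hypothesis forces every extreme point of a fibre $K_f$ to be a characteristic function, giving $T(W)=G(\AA)$; the easy direction $(2)\Rightarrow(1)$ via the midpoint $\tfrac12 G(E)$ is likewise standard.
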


The previous theorem works by assuming instead of \eqref{EQT4}, that $G<\!\!<\mu$, see for instance \cite{ASK19}, Remark 1.

\begin{theorem}[Corollary 7, p. 265 in \citep{DiU77}] \label{COR7}
A countably additive vector measure $G:\AA\rightarrow X$ is Lyapunov iff for every $E\in \AA$, there is $F\in \AA$ such that $G(E\cap F)=G(E)/2$.
\end{theorem}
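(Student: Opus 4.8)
\emph{Proof plan.} The plan is to settle the forward implication by a one-line convexity observation and to derive the (substantial) reverse implication from Theorem \ref{TH4}.

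For ``$\Rightarrow$'': if $G$ is Lyapunov then, for each $E\in\AA$, the set $G(\AA_E)$ is convex and contains $0=G(\emptyset)$ (take $A=\emptyset$) and $G(E)$ (take $A=\Omega$), hence also their midpoint $G(E)/2$; this is exactly the existence of $F\in\AA$ with $G(E\cap F)=G(E)/2$.

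For ``$\Leftarrow$'': I would first produce a scalar measure to which Theorem \ref{TH4} applies. By Rybakov's theorem (see, e.g., \cite{DiU77}) pick $x_0^*\in X^*$, $\|x_0^*\|\le1$, such that $\mu:=|x_0^*\circ G|$ is finite, positive and $G\ll\mu$. Then $G\ll\mu$ forces $|G|\ll\mu$ while $\mu=|x_0^*\circ G|\le|G|$, so $\mu$ and $|G|$ are equivalent; in particular $\mu(E)=0$ iff $G(E\cap F)=0$ for all $F\in\AA$, which is condition \eqref{EQT4}. By Theorem \ref{TH4} it is then enough to verify its assertion 1): that for every $E\in\AA$ with $\mu(E)>0$, the operator $f\mapsto\int_E f\,dG$ on $L^\infty(E,\mu)$ fails to be injective. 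Here is how I would do that: since $\mu(E)>0$, \eqref{EQT4} supplies $F\subset E$ in $\AA$ with $G(F)\neq0$ (whence $\mu(F)>0$); applying the bisection hypothesis to $F$ gives $F'\subset F$ in $\AA$ with $G(F')=G(F)/2$, and then $G(F\setminus F')=G(F)/2$ as well; finally $f:=\chi_{F'}-\chi_{F\setminus F'}$ is a non-zero element of $L^\infty(E,\mu)$ (as $\mu(\{f\neq0\})=\mu(F)>0$) satisfying $\int_E f\,dG=G(F')-G(F\setminus F')=0$. Theorem \ref{TH4} then yields that $G$ is Lyapunov.

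Once Theorem \ref{TH4} is available this is essentially immediate, so the only step needing any care is the choice of $\mu$: the total variation $|G|$ may be infinite for an $X$-valued countably additive measure, so a genuine control measure (Rybakov, or Bartle--Dunford--Schwartz) is required in place of $|G|$. If one wished to bypass Theorem \ref{TH4}, the genuine obstacle would be the weak compactness of the range: convexity of $G(\AA_E)$ is cheap, since iterating the bisection along a dyadic tree and using countable additivity of $G$ produces, for each $t\in[0,1]$, a set $F_t\subset E$ with $G(F_t)=tG(E)$, and an arbitrary convex combination of $G(A),G(B)$ with $A,B\subset E$ follows by splitting along $A\cap B$, $A\setminus B$ and $B\setminus A$; weak compactness, however, would still have to be extracted from countable additivity via the Bartle--Dunford--Schwartz theorem --- precisely the ingredient that Theorem \ref{TH4} encapsulates.
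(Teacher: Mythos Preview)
The paper does not supply its own proof of this statement: Theorem~\ref{COR7} is simply quoted from \cite{DiU77} (Corollary~7, p.~265) as a known tool, so there is no in-paper argument to compare against. Your proof is correct, and in fact it mirrors the route taken in the cited source: in \cite{DiU77} this corollary is deduced directly from Knowles' characterization (their Theorem~4, p.~263), which is precisely Theorem~\ref{TH4} here, together with Rybakov's theorem to produce the control measure. The only point worth a remark is the one you already flagged---that $|G|$ need not be finite, so a genuine control measure via Rybakov (or Bartle--Dunford--Schwartz) is required; with that in hand your construction of the nonzero $f=\chi_{F'}-\chi_{F\setminus F'}$ in the kernel of $g\mapsto\int_E g\,dG$ is exactly the intended application of the bisection hypothesis.
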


\begin{lemma}[Lemma 2 in \cite{ASK19}]\label{PLY} Let $X$ a Banach space and $G :\AA\rightarrow X$ be a Lyapunov vector measure and $\alpha_i :\Omega\rightarrow [0,\; 1]$, $i=1,...,n,$  be measurable functions, such that $\overset{n}{\underset{i=1}{\sum}}\alpha_i(\omega)=1,$ $G$-a.e. on $\Omega$.\footnote{Except on a $G$-null set. Note that $A\in\AA$ is said to be $G$-null iff $G(A\cap F)=0$ for every $F\in \AA$.} Then, there exists a measurable partition : $E_i,i=1,...,n$, of $\Omega$ such that, for every $i=1,...,n$, $\int_\Omega \alpha_i(\omega) dG=G(E_i)$.
\end{lemma}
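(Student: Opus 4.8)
The plan is to realize the required partition as an extreme point of a suitable weak*-compact convex set, and to use the non-injectivity characterization of Lyapunov vector measures (Theorem \ref{TH4}) to force that extreme point to be $\{0,1\}$-valued.

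First I would fix a control measure $\lambda$ for $G$, i.e.\ a finite positive measure on $\AA$ whose null sets are exactly the $G$-null sets (such a measure exists since $G$ is countably additive, by the Bartle--Dunford--Schwartz theorem; see \citep{DiU77}). Then the hypothesis reads $\sum_{i=1}^n\alpha_i=1$ $\lambda$-a.e., and each $\alpha_i$ may be regarded as an element of $L^\infty(\Omega,\AA,\lambda)$. Consider the set $\Pi$ of all $n$-tuples $(\gamma_1,\dots,\gamma_n)\in L^\infty(\Omega,\AA,\lambda)^n$ with $\gamma_i\ge 0$ $\lambda$-a.e.\ for all $i$ and $\sum_i\gamma_i=1$ $\lambda$-a.e. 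This set is convex, and it is compact for the product weak* topology (each coordinate lies in the weak*-compact order interval of functions with values in $[0,1]$, and the affine constraint $\sum_i\gamma_i=1$ is weak*-closed); note $\alpha=(\alpha_1,\dots,\alpha_n)\in\Pi$. Next I would introduce the map $\Theta:L^\infty(\Omega,\AA,\lambda)^n\to X^n$, $\Theta(\gamma)=\big(\int_\Omega\gamma_1\,dG,\dots,\int_\Omega\gamma_n\,dG\big)$; it is well defined (since $\lambda$-null $=$ $G$-null), linear, and weak*-to-weak continuous, because each coordinate map $g\mapsto\int_\Omega g\,dG$ is so, as recalled after the definition of Lyapunov measures. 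Setting $x:=\Theta(\alpha)$, the fibre $K:=\Pi\cap\Theta^{-1}(\{x\})$ is nonempty (it contains $\alpha$), convex, and weak*-compact (closed subset of $\Pi$, since $\Theta^{-1}(\{x\})$ is weak*-closed), so by the Krein--Milman theorem it has an extreme point $\bar\gamma=(\bar\gamma_1,\dots,\bar\gamma_n)$.

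The core of the proof, and the step I expect to be the main obstacle, is to show that $\bar\gamma_i\in\{0,1\}$ $\lambda$-a.e.\ for every $i$ (the case $n=1$ being trivial). Suppose not: then for some index $i_0$ the set $\{0<\bar\gamma_{i_0}<1\}$ is not $\lambda$-null, so there are $\delta\in(0,\tfrac12)$ and $S\in\AA$ with $\lambda(S)>0$ and $\delta\le\bar\gamma_{i_0}\le 1-\delta$ on $S$; shrinking $S$ by a pigeonhole argument (and keeping $\lambda(S)>0$) I may in addition fix $i_1\neq i_0$ with $\bar\gamma_{i_1}\ge\delta/(n-1)$ on $S$, while still $\bar\gamma_{i_1}\le 1-\delta$ there. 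Since $G$ is Lyapunov, Theorem \ref{TH4} applied with $E=S$ gives $u\in L^\infty(S,\lambda)$ that is nonzero $\lambda$-a.e.\ on $S$ and satisfies $\int_S u\,dG=0$; rescaling, $|u|\le 1$, and I extend $u$ by $0$ off $S$. For $\epsilon:=\tfrac12\min\{\delta,\delta/(n-1)\}>0$, the perturbations $\gamma^{\pm}$ obtained from $\bar\gamma$ by replacing its $i_0$-th coordinate with $\bar\gamma_{i_0}\pm\epsilon u$ and its $i_1$-th coordinate with $\bar\gamma_{i_1}\mp\epsilon u$ (all other coordinates unchanged) still lie in $\Pi$ — the bounds on $\bar\gamma_{i_0},\bar\gamma_{i_1}$ on $S$ keep the perturbed coordinates in $[0,1]$ and the sum equal to $1$ — and they lie in $K$ because $\int_\Omega(\pm\epsilon u)\,dG=\pm\epsilon\int_S u\,dG=0$; moreover $\gamma^+\neq\gamma^-$ (they differ by $2\epsilon u\neq 0$ in the $i_0$-th coordinate) and $\bar\gamma=\tfrac12(\gamma^++\gamma^-)$, contradicting extremality of $\bar\gamma$ in $K$. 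The verification that $\gamma^\pm\in\Pi$ is routine bound-chasing; the conceptual content is that the only way an extreme tuple of $K$ could fail to be $\{0,1\}$-valued is ruled out precisely by the Lyapunov property, through Theorem \ref{TH4}.

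Finally, writing $\bar\gamma_i=\chi_{E_i}$ $\lambda$-a.e.\ with $E_i\in\AA$, the relation $\sum_i\bar\gamma_i=1$ $\lambda$-a.e.\ shows the sets $E_i$ are pairwise disjoint and cover $\Omega$ up to a $\lambda$-null (hence $G$-null) set; redefining them on that null set yields a measurable partition $E_1,\dots,E_n$ of $\Omega$, and for each $i$ one gets $G(E_i)=\int_\Omega\chi_{E_i}\,dG=\int_\Omega\bar\gamma_i\,dG=\Theta(\bar\gamma)_i=x_i=\int_\Omega\alpha_i\,dG$, which is the desired conclusion.
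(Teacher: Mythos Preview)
The paper does not supply a proof of this lemma; it merely records it in the Appendix as Lemma~2 of \citet{ASK19} and invokes it in the proof of Theorem~\ref{BB}. So there is no ``paper's own proof'' to compare against.

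Your argument is correct and follows the classical extreme-point strategy: build the weak*-compact convex fibre $K=\Pi\cap\Theta^{-1}(\{x\})$, take an extreme point via Krein--Milman, and use the Lyapunov property through Theorem~\ref{TH4} to rule out fractional coordinates by a mass-shifting perturbation between two indices. The pigeonhole step locating $i_1$ and shrinking $S$ is fine, and the bound-chasing for $\gamma^\pm\in\Pi$ checks out with your choice of $\epsilon$.

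One small slip to fix: Theorem~\ref{TH4} does \emph{not} yield a function $u$ that is ``nonzero $\lambda$-a.e.\ on $S$''; it only gives some $u\in L^\infty(S,\lambda)$ with $\|u\|_\infty>0$ (i.e.\ $u\neq 0$ on a set of positive $\lambda$-measure) satisfying $\int_S u\,dG=0$. This weaker conclusion is all you actually use --- you need $\gamma^+\neq\gamma^-$, and that follows from $u\neq 0$ as an element of $L^\infty$ --- so the slip is harmless, but the wording should be corrected.
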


\begin{theorem}[Theorem IV.11, p. 101, in \citep{CaV77}] \label{FINSEL} Let $(\Omega,\AA,\mu)$ be a probability space, where $\AA$ is $\mu$-complete and $T :\Omega \rightarrow \IR^n$ a measurable set-valued map with nonempty convex compact values. Define the set-valued map $\ext(T):\Omega\rightarrow \IR^n$ by  $\ext(T)(\omega)=\{a \in \IR^n: a\text{ is an extreme point of } T(\omega)\}$.
Let $s:\Omega\rightarrow X$ be a measurable selection of $T$. 
Then, there exist $n+1$ measurable selections $h_i:\Omega\rightarrow \IR^n$, $i=1,...,n+1$, of $\ext(T)$ and $n+1$ weight functions $\alpha_i\in \LL^\infty(\Omega,\mu,[0,1])$, $i=1,...,n+1$,  such that, 
$$\overset{n+1}{\underset{i=1}{\sum}}\alpha_i (\omega)=1, \text{ and }s(\omega)=\overset{n+1}{\underset{i=1}{\sum}}\alpha_i (\omega)h_i(\omega),\text{ for  every }\omega\in \Omega,$$ 

where, $\LL^\infty(\Omega,\mu,[0,\;1])$ refers to the set of measurable and bounded functions defined from $\Omega$ to $[0,\;1]$.
\end{theorem}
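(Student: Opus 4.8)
The plan is to combine the finite-dimensional Carathéodory representation theorem with a measurable selection argument, using the implicit measurable selection theorem (\citet{CaV77}, Theorem III.38) already invoked in the proof of Corollary \ref{THMP}. Since $T(\omega)$ is convex and compact in $\IR^n$, Minkowski's theorem gives $T(\omega)=\co(\ext(T)(\omega))$, so the selection value $s(\omega)\in T(\omega)\subset\IR^n$ lies in $\co(\ext(T)(\omega))$ for every $\omega$. Carathéodory's theorem then guarantees, for each individual $\omega$, the existence of at most $n+1$ extreme points $e_1,\dots,e_{n+1}\in \ext(T)(\omega)$ and weights $\lambda_1,\dots,\lambda_{n+1}\in[0,1]$ with $\sum_i\lambda_i=1$ and $s(\omega)=\sum_i\lambda_i e_i$. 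The entire difficulty is to perform this pointwise choice \emph{measurably} in $\omega$.

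First I would establish that the extreme-point multifunction $\ext(T)$ is measurable in the sense of having an $\AA\otimes\BB(\IR^n)$-measurable graph. The point is that a point $x\in T(\omega)$ fails to be extreme precisely when there is $v\neq 0$ with $x+v\in T(\omega)$ and $x-v\in T(\omega)$; hence
$$\mathrm{Gr}(T)\setminus \mathrm{Gr}(\ext(T))=\bigcup_{k\in\IN^*}\mathrm{proj}_{(\omega,x)}\Big\{(\omega,x,v): x+v\in T(\omega),\ x-v\in T(\omega),\ \|v\|\geq \tfrac1k\Big\}.$$
Each set inside the projection is $\AA\otimes\BB(\IR^n)$-measurable, because $\mathrm{Gr}(T)$ is (as $T$ is a measurable multifunction with closed values) and the maps $(\omega,x,v)\mapsto x\pm v$ are continuous; the projection theorem, valid since $\AA$ is $\mu$-complete, then makes each projection measurable, so $\mathrm{Gr}(\ext(T))$ is measurable. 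Note that $\ext(T)(\omega)$ need not be closed, which is exactly why I rely on graph-measurability rather than on a closed-valued selection theorem.

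Next I would set $Z=\Delta\times(\IR^n)^{n+1}$, where $\Delta=\{\lambda\in[0,1]^{n+1}:\sum_i\lambda_i=1\}$ is the simplex, and introduce the multifunction $\Gamma:\Omega\rightarrow Z$ by $\Gamma(\omega)=\Delta\times\ext(T)(\omega)^{n+1}$, which has measurable graph by the previous step. Define the Carathéodory map $g:\Omega\times Z\rightarrow\IR^n$ by $g(\omega,\lambda,e_1,\dots,e_{n+1})=\sum_i\lambda_i e_i$, continuous in the $Z$-variable and (trivially) measurable in $\omega$. By the Carathéodory representation above, the measurable function $s$ satisfies $s(\omega)\in g(\{\omega\}\times\Gamma(\omega))$ for every $\omega$. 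The implicit measurable selection theorem (\citet{CaV77}, Theorem III.38) then produces a measurable selection $\omega\mapsto(\alpha(\omega),h_1(\omega),\dots,h_{n+1}(\omega))$ of $\Gamma$ solving $g(\omega,\alpha(\omega),h_1(\omega),\dots,h_{n+1}(\omega))=s(\omega)$. Writing $\alpha=(\alpha_1,\dots,\alpha_{n+1})$, this yields measurable weights $\alpha_i\in\LL^\infty(\Omega,\mu,[0,1])$ with $\sum_i\alpha_i=1$ and measurable selections $h_i$ of $\ext(T)$ with $s=\sum_i\alpha_i h_i$ pointwise; here the $\mu$-completeness of $\AA$ upgrades the universally measurable selections supplied by the theorem to genuinely $\AA$-measurable ones.

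The main obstacle is the first step: the measurability of $\ext(T)$ together with the fact that its values are in general not closed. This forces the graph-measurable formulation and the appeal to the projection theorem, and it is the only genuinely nontrivial ingredient—once $\mathrm{Gr}(\ext(T))$ is known to be measurable, the assembly of $\Gamma$ and $g$ and the appeal to the implicit selection theorem are routine. The pointwise Carathéodory bound of $n+1$ points is precisely what fixes the number of extreme-point selections at $n+1$.
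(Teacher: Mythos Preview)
The paper does not supply a proof of this statement: it is quoted verbatim from \citet{CaV77} (Theorem~IV.11) in the Appendix as an auxiliary tool, with no accompanying argument. There is therefore nothing in the paper to compare your proof against.

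That said, your reconstruction is sound and is in fact close to the argument one finds in \citet{CaV77}. The two substantive points---graph-measurability of $\ext(T)$ via the projection theorem, and the Filippov/implicit-selection step applied to the Carath\'eodory map $g(\omega,\lambda,e_1,\dots,e_{n+1})=\sum_i\lambda_i e_i$ restricted to $\Gamma(\omega)=\Delta\times\ext(T)(\omega)^{n+1}$---are exactly the right ingredients. One minor remark: rather than invoking Theorem~III.38 of \citet{CaV77} directly on $\Gamma$, it is cleaner to observe that the solution multifunction $\Sigma(\omega)=\{z\in\Gamma(\omega):g(\omega,z)=s(\omega)\}$ has $\AA\otimes\BB(Z)$-measurable graph (intersection of $\mathrm{Gr}(\Gamma)$ with the jointly measurable set $\{g(\cdot,\cdot)=s(\cdot)\}$) and then apply the Aumann--von~Neumann selection theorem, which is what ultimately uses the $\mu$-completeness of $\AA$. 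This is a cosmetic point; your outline already contains the correct idea.
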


\subsection{Proofs of Theorems \ref{theorem-induction} and \ref{THLG}}
We produce thereafter a further elementary proof of Theorem \ref{theorem-induction} and deduce from it Theorem \ref{THLG}. Let us start with some needed lemmas.

\begin{lemma}\label{CARSIG}
Let $\CC$ be a sub-$\sigma$-algebra of $\AA$ and $E\in \AA^+$. Then,
\begin{enumerate}
\item The $\sigma$-algebra  $\CCE$, generated by $\CC\cup\{E\},$ is given by
$$
\CC_E \oplus \CC_{\bar{E}}:=\{A\subset \Omega: A \cap E \in \CC_E \mbox{ and }
A\cap\bar{E}\in \CC_{\bar{E}} \}. 
$$
Hence, $B\in \sigma(\CC\cup \{E\})$ iff there is  $C_1,C_2\in \CC$ such that $B=(C_1\cap E)\cup(C_2\cap \bar E).$ 
\item $\CC$ is $\mu$-coarser than $\AA$ iff $\CCE$ is $\mu$-coarser than $\AA$. 
Consequently, $\CC$ is $\mu$-coarser than $\AA$ iff $\sigma(\CC\cup\{E_i : i \text{ in a finite set }I\})$ is $\mu$-coarser than $\AA$.

\end{enumerate}
\end{lemma}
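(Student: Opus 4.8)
The plan is to prove the two assertions in order; part (1) is a purely set-theoretic identity, while the content of part (2) sits entirely in one direction and rests on a short computation with symmetric differences.

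For (1) I would first check that $\CC_E\oplus\CC_{\bar E}$ is a $\sigma$-algebra on $\Omega$. Since $\CC_E$ and $\CC_{\bar E}$ are the trace $\sigma$-algebras of $\CC$ on $E$ and $\bar E$, they are stable under relative complement and countable unions; intersecting with $E$ and with $\bar E$ then shows that $\CC_E\oplus\CC_{\bar E}$ is stable under complement and countable union, and it plainly contains $\Omega$. It contains $\CC$ because $C\cap E\in\CC_E$ and $C\cap\bar E\in\CC_{\bar E}$ for every $C\in\CC$, and it contains $E$ because $E\cap E=E\in\CC_E$ and $E\cap\bar E=\emptyset\in\CC_{\bar E}$; hence $\CCE\subseteq\CC_E\oplus\CC_{\bar E}$. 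Conversely, if $A\cap E\in\CC_E$ and $A\cap\bar E\in\CC_{\bar E}$, pick $C_1,C_2\in\CC$ with $A\cap E=C_1\cap E$ and $A\cap\bar E=C_2\cap\bar E$; then $A=(C_1\cap E)\cup(C_2\cap\bar E)\in\CCE$ since $\bar E\in\CCE$. This proves the equality, and the displayed representation is exactly the ``consequently'' reformulation.

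For (2), the implication ``$\CCE$ $\mu$-coarser $\Rightarrow$ $\CC$ $\mu$-coarser'' is immediate monotonicity: from $\CC\subseteq\CCE$ we get $\CC_H\subseteq(\CCE)_H$ for every $H\in\AA$, so a set $H_0\subseteq H$ with $\mu(F\Delta H_0)>0$ for all $F\in(\CCE)_H$ in particular satisfies $\mu(F\Delta H_0)>0$ for all $F\in\CC_H$. The nontrivial implication is the converse. Assume $\CC$ is $\mu$-coarser than $\AA$ and fix $H\in\AA^+$. Writing $H=(H\cap E)\sqcup(H\cap\bar E)$, at least one summand has positive measure; suppose $\mu(H\cap E)>0$, the other case being symmetric upon exchanging $E$ and $\bar E$. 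Apply Definition \ref{MUC} to $H\cap E\in\AA^+$ and $\CC$ to get $H_0\subseteq H\cap E$ with $\mu(F\Delta H_0)>0$ for every $F\in\CC_{H\cap E}$. I claim this same $H_0$ witnesses the $\mu$-coarseness of $\CCE$ over $H$. Indeed, given $D\in\CCE$, write $D=(C_1\cap E)\cup(C_2\cap\bar E)$ with $C_1,C_2\in\CC$ by part (1); since $H_0\subseteq E$, a direct check gives
$$(D\cap H)\Delta H_0=\bigl((C_1\cap(H\cap E))\Delta H_0\bigr)\ \sqcup\ (C_2\cap H\cap\bar E),$$
a disjoint union of a subset of $E$ and a subset of $\bar E$. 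As $C_1\cap(H\cap E)\in\CC_{H\cap E}$, the first component has positive measure, hence so does $(D\cap H)\Delta H_0$. Therefore $\CCE$ is $\mu$-coarser than $\AA$.

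The statement about finite families $\{E_i:i\in I\}$ then follows by induction on $|I|$, writing $\sigma(\CC\cup\{E_i:i\in I\})=\sigma\bigl(\sigma(\CC\cup\{E_i:i\in I,\ i\neq i_0\})\cup\{E_{i_0}\}\bigr)$ and applying the single-set equivalence at each step (with monotonicity for the reverse direction). The main obstacle is the set identity for $(D\cap H)\Delta H_0$ in the converse of (2): the point is to exploit $H_0\subseteq H\cap E$ in order to peel off, disjointly, the ``$\bar E$-part'' $C_2\cap H\cap\bar E$ of $D\cap H$ and reduce the symmetric difference to one against a genuine member of $\CC_{H\cap E}$; everything else is routine bookkeeping.
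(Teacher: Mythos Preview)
Your proof is correct and follows essentially the same route as the paper's: part (1) by checking that $\CC_E\oplus\CC_{\bar E}$ is a $\sigma$-algebra containing $\CC\cup\{E\}$ and contained in $\CCE$, and part (2) by passing to a non-null piece $H\cap E$ (or $H\cap\bar E$) and using the representation from part (1) to reduce the symmetric difference against $(\CCE)_H$ to one against $\CC_{H\cap E}$. The only cosmetic difference is that you display the full disjoint-union identity for $(D\cap H)\Delta H_0$, whereas the paper records only the inclusion $H_0\Delta(C_1\cap H\cap E)\subset (D\cap H)\Delta H_0$; both conclude identically.
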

\begin{proof}
1) If $A\subset\Omega$ is such that $A \cap E \in \CC_E$  and $A\cap\bar{E}\in \CC_{\bar{E}}$ that is $A \cap E =C\cap E$
  and  $A\cap\bar{E}= C'\cap \bar{E}$ for $C,C'\in \CC$, then $A=(C\cap E) \cup (C'\cap \bar{E})\in \CCE,$ which establishes the inclusion $\CC_E \oplus \CC_{\bar{E}}\subset \CCE$. As obviously $\CC_E \oplus \CC_{\bar{E}}$ is a  $\sigma$-algebra, it remains to prove $\CC \cup\{E\} \subset \CC_E \oplus \CC_{\bar{E}}.$  To do so, let $A\in\CC \cup\{E\}.$ If $A=E$, $ A\cap E= E\in \CC_E$ and $A\cap \bar{E}=\emptyset\in \CC_{\bar{E}}.$ 
If $A \in \CC$, $A=(A\cap E)\cup (A\cap\bar{E})\in \CC_E \oplus \CC_{\bar{E}}$.

\vspace{0,3cm} 
2) Suppose that $\CC$ is $\mu$-coarser than $\AA$. Let $D\in \AA^+$. Suppose that $\mu(D\cap E)>0$. Then, there exists an $\AA$-measurable subset $D_0$ of $D\cap E$ such that 
\begin{equation}\label{EQ1L1}
\mu(D_0\bigtriangleup (D\cap E\cap C))>0, \quad\forall C\in \CC.
\end{equation}  
 Let $B\in  \CCE$, i.e. $B=(C_1\cap E)\cup(C_2\cap\bar{E})$ for $C_1,C_2\in \CC.$ Thus, 
$D_0\bigtriangleup (D\cap B)= D_0\bigtriangleup [(D\cap E\cap C_1)\cup (D\cap \bar E\cap C_2)]$ and since $D_0\subset  D\cap E$,  $D_0\bigtriangleup (D\cap E\cap C_1)\subset [D_0\Delta (D\cap B)]$. 

Then, from \eqref{EQ1L1},
$$
\mu(D_0\bigtriangleup (D\cap B))>0,\quad \forall B\in \CCE.
$$
The remaining case $\mu(E\cap D)=0$  can be handled in the same manner as previously by reasoning on $\bar{E}\cap D$ which is non-negligible. This means that $\sigma(\CC\cup\{E\})$ is $\mu$-coarser than $\AA$.
The converse is trivial, since $\CC\subset \sigma(\CC\cup\{E\})$.

\end{proof}

\begin{lemma}\label{JLY}
Let $\CC\subset \AA$ be a sub-$\sigma$-algebra that is $\mu$-coarser than $\AA$. For every $f\in  {L^1(\Omega,\AA,\mu)}$, the vector measure  defined from $\AA$ to $L^1(\Omega,\CC,\mu)$ by 
$$
fG_\CC(E)=\IE(f\chi_E|\CC)
$$
is well defined, countably additive and Lyapunov. 
\end{lemma}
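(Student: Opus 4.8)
The boundedness, countable additivity and $\mu$-continuity of $fG_\CC$ are already contained in Section~\ref{SCEVM}: this is precisely the measure $F_\CC$, $F_\CC(E)=\IE(f\chi_E|\CC)$, discussed there, and $fG_\CC<\!\!<\mu$ because $\|\IE(f\chi_E|\CC)\|_{L^1}\le\int_E|f|\,d\mu\to 0$ as $\mu(E)\to0$. So the whole content is the Lyapunov property, which I would obtain from Knowles' criterion (Theorem~\ref{TH4}) in the $\mu$-continuity formulation noted in the remark right after it --- note that the equivalence \eqref{EQT4} itself can fail for $fG_\CC$, e.g.\ when $f$ vanishes on a set of positive measure. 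Concretely, it suffices to show that for every $E\in\AA^+$ the operator $g\mapsto\int_E g\,d(fG_\CC)=\IE(gf\chi_E|\CC)$ from $L^\infty(E,\mu)$ into $L^1(\Omega,\CC,\mu)$ is not one-to-one, i.e.\ to exhibit some $g\in L^\infty(E,\mu)$ which is not $\mu$-a.e.\ zero and satisfies $\IE(gf\chi_E|\CC)=0$.

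Fix $E\in\AA^+$. If $\mu(E\cap\{f=0\})>0$, the choice $g:=\chi_{E\cap\{f=0\}}$ works. Otherwise $f\ne0$ $\mu$-a.e.\ on $E$, and since $f$ is finite a.e.\ the sets $E_k:=E\cap\{1/k\le|f|\le k\}$ increase to $E$ modulo a null set, so $\mu(E_k)>0$ for some $k$, which I fix; the point of this truncation is that on $E_k$ the function $f$ is bounded and bounded away from $0$, which reduces matters to the ``$f\equiv1$'' situation. I would then apply the atomless-over-$\CC$ property (Definition~\ref{ATO}) to $E_k$: there is $E_0\in\AA$, $E_0\subset E_k$, such that, fixing $\CC$-measurable representatives $\phi$ of $\IE(\chi_{E_0}|\CC)$ and $\psi$ of $\IE(\chi_{E_k}|\CC)$, the $\CC$-measurable set $B:=\{0<\phi<\psi\}$ has $\mu(B)>0$ (it is on such a set that Definition~\ref{ATO} asserts the strict inequalities).

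Now set
$$
g:=\frac{\chi_B}{f}\Big[(\psi-\phi)\,\chi_{E_0}-\phi\,\chi_{E_k\setminus E_0}\Big],
$$
with the convention that $g$ vanishes off $E_k\cap B$. Then $g$ is $\AA$-measurable and $|g|\le k$ a.e.\ (on its support $|f|\ge1/k$ and $0\le\phi\le\psi\le1$), so $g\in L^\infty(E,\mu)$; moreover $gf\chi_E=\chi_B\big[(\psi-\phi)\chi_{E_0}-\phi\chi_{E_k\setminus E_0}\big]$, so, pulling the $\CC$-measurable factors $\chi_B$, $\phi$, $\psi-\phi$ out of $\IE(\cdot|\CC)$ and using $E_0\subset E_k$,
$$
\IE(gf\chi_E|\CC)=\chi_B\big[(\psi-\phi)\,\phi-\phi\,(\psi-\phi)\big]=0.
$$
Also $g$ is not $\mu$-a.e.\ zero, since on $E_0\cap B$ one has $g=(\psi-\phi)/f$ with $\psi-\phi>0$ and $f\ne0$, while $\mu(E_0\cap B)=\int_B\phi\,d\mu>0$ because $\phi>0$ on $B$ and $\mu(B)>0$. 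This is exactly the non-injectivity required, and Theorem~\ref{TH4} then gives that $fG_\CC$ is Lyapunov.

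The substantive step --- and the one I expect to be the main obstacle --- is this last construction: choosing the weights $\psi-\phi$ and $\phi$ on the $\CC$-measurable set $B$ so that the conditional expectation cancels, after first cutting $E$ down to a piece $E_k$ on which $f$ is bounded and bounded away from zero, so that a general $L^1$ integrand is reduced to the indicator statement supplied by Definition~\ref{ATO}. The remaining assertions (well-definedness, $\sigma$-additivity, $\mu$-continuity) are routine and already essentially recorded in Section~\ref{SCEVM}.
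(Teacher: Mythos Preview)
Your proof is correct. Both you and the paper reduce to Knowles' criterion (Theorem~\ref{TH4}) and both first pass to a subset of $E$ on which $|f|$ is bounded away from zero so as to divide by $f$ and reduce to an indicator-type statement. The difference is in how the non-trivial kernel element is produced. The paper works with the enlarged $\sigma$-algebra $\CCE=\sigma(\CC\cup\{E\})$ (via Lemma~\ref{CARSIG}) and takes $g=\chi_{E_0}-\IE(\chi_{E_0}|\CCE)$, using Definition~\ref{MUC} to ensure $g\neq0$. You instead invoke the equivalent Definition~\ref{ATO} and build the cancellation directly with $\CC$-measurable weights, $g=\tfrac{\chi_B}{f}\big[(\psi-\phi)\chi_{E_0}-\phi\,\chi_{E_k\setminus E_0}\big]$, so that $\IE(gf\chi_E|\CC)=\chi_B[(\psi-\phi)\phi-\phi(\psi-\phi)]=0$. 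Your route is a bit more self-contained for this lemma alone, since it avoids the auxiliary $\sigma$-algebra $\CCE$ and Lemma~\ref{CARSIG}; the paper's route has the advantage that the $\CCE$ device is exactly what is reused in the inductive proof of Theorem~\ref{theorem-induction}, so it is set up once and exploited twice. A minor remark: your upper truncation $|f|\le k$ is never used (only $|f|\ge 1/k$ matters for $g\in L^\infty$), but it does no harm.
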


\begin{proof} As noticed in section \ref{SCEVM}, $fG_\CC$ is bounded, countably additive, and for every $g\in L^\infty(\Omega,\AA,\mu)$, $\int_E g dfG_\CC= \IE(fg\chi_E|\CC).$  

According to Theorem \ref{TH4}, we have to prove that given an $E\in \AA^+$, there exists a function  $\bar g\in L^\infty(\Omega,\AA)$ vanishing outside of $E$, with $\|\bar g\|_\infty>0$ and such that $\IE(\bar g f\chi_E|\CC)=0$. Let $E\in \AA^+$. If $f=0, \mu$-a.e. on $E$, the result is obvious. Otherwise, there is $E'\subset E$, $E'\in \AA^+$ and $\varepsilon>0$, such that $|f|>\varepsilon$ on $E'$. Without loss of generality, we assume that $E'=E$ below, since we can perform the analysis below on $E'$ instead of $E$.

The sub-$\sigma$-algebra $\CC$ being $\mu$-coarser than $\AA$, there exists $E_0\in \AA$, $E_0\subset E$ such that $\mu(E_0\bigtriangleup F)>0$ for all $F\in \CC_E.$ 

Consider the function $g$ defined by $g= \chi_{E_0}-\IE(\chi_{E_0}|\CCE)$. 
Observe that $\IE(g\chi_E|\CC)=\IE[\IE(g\chi_E|\CCE|\CC)]=0$.
From $0\leq \chi_{E_0}\leq \chi_E$, it follows that $0\leq \IE(\chi_{E_0}|\CCE)\leq \IE(\chi_{E}|\CCE)=\chi_E$ for $\mu$-almost every $\omega\in \Omega$,  which implies that $g$ vanishes outside of $E$. 
The set  $F$ defined by $F= E\cap \{\omega\in \Omega: \IE(\chi_{E_0}|\CCE)(\omega)=1\}$ is a subset of $E$ and is $\CCE$-measurable. Hence, by Lemma \ref{CARSIG}, $F\in \CC_E$.
Consequently $\mu(E_0\bigtriangleup F)>0.$ Since $g\neq 0$ on $E_0\bigtriangleup F$, it follows that $\|g\|_\infty>0$. 

Set $\bar g=\frac{g}{f}$. Then, $\bar g\in L^\infty(\Omega,\AA,\mu)$, $\|\bar g\|_\infty>0,$ $\bar g$ vanishes outside of $E$ and $\IE(f\bar g\chi_E|\CC)=\IE(g\chi_E|\CC)=0$.
This ends the proof. 
\end{proof}

\begin{proof}[Proof of Theorem \ref{theorem-induction}] To shorten notation, we write $G$ (respectively $G_E$, $fG$, $fG_E$) instead of $G_\CC$ (respectively $G_{\CCE}$, $fG_\CC$, $fG_{\CCE})$.

We give a proof by induction on $n$. The statement $P(1)$ is nothing else but Lemma \ref{JLY}. Assume  that $P(n)$ is true. Let $f=(f_1,f_2,\dots,f_{n+1})\in {L^1(\Omega,\AA,\mu)^{n+1}}$. Let $E\in \AA^+$. According to Theorem \ref{TH4}, we have to construct a function $g\in L^\infty(\Omega,\AA,\mu)$ vanishing outside of $E$, with $\|g\|_\infty>0,$  such that $\IE(fg\chi_E|\CCE)=\IE(fg|\CCE)=0.$ This will end the proof since $\IE(fg|\CC)=\IE(\IE(fg|\CCE)|\CC)=0$.
Without loss of generality, we can assume that $\sum_{i=1}^{n+1} |f_i|>0$ for $\mu$-almost every $\omega\in E$:
  otherwise, there is $E'\in\AA^+$, $E'\subset E$, such that $f_i\equiv 0$ on $E'$, for every $i=1,...,n+1$. Set in this case $g=\chi_{E'}$. We can assume as well that every $f_i,i=1,...,n+1,$ has a constant sign on $E$:
  \begin{equation}\label{eq-signFi}
f_i= s_i |f_i| \mbox{ with } s_i\in\{-1,0,1\} \mbox{ for all } i=1,\cdots,n+1,
\end{equation}
  since we can always find such an $\AA$-measurable subset $E'$ of $E$ with a strictly positive measure and perform all the forthcoming analysis on $E'$ instead of $E$.  
Let $h_i= \frac{|f_i|}{\sum_{j=1}^{n+1} |f_j|}\chi_E$, which clearly belongs to ${L^1(\Omega,\AA,\mu).}$  

If  $h_i =  \IE(h_i|\CCE), \mu-a.e.$, that is $h_i$ is $\CCE$-measurable up-to a $\mu$-null set, for all $i =1,\dots, n+1,$ then it results that $\IE(h_i \bar{g}|\CCE )=h_i \IE(\bar{g}|\CCE), \mu-a.e.$ for all $\bar{g}\in L^\infty(\Omega,\AA,\mu).$ As $G_E$ is Lyapunov (Lemma \ref{JLY}, $f=1$), there is some $\bar{g}\in L^\infty(\Omega,\AA,\mu)$, $\|\bar{g}\|_\infty>0$, vanishing outside of $E$  and satisfying 
\begin{equation}\label{eq-Case1}
\IE(h_i \bar{g}|\CCE )=0,  \mu-a.e., \quad \forall i=1,\dots,n+1,.
\end{equation}
In the contrary case, there exists an $\AA$-measurable subset $A\subset E$, with $\mu(A)>0$, where all the functions $h_i^0:=h_i-\IE(h_i|\CCE)$ keep a constant sign and  where $|h^0_{i_0}|>0$, for  $\mu$-almost every $\omega\in A$, for at least one $i_0$. Note that as defined, the function $h_i^0$, $i=1,\dots,n+1,$ satisfy
\begin{equation}\label{eq-IEH0}
\IE(h_i^0|\CCE)=\IE(h_i|\CCE) -\IE(\IE(h_i|\CCE)|\CCE)=0, \mu-a.e.
\end{equation}
Without loss of generality, assume that $i_0\leq n$. Applying the induction hypothesis $P(n)$ with 
$H^0=(h_1^0,\dots,h_n^0)$ and the sub-$\sigma$-algebra $\CCE$, it follows that $H_0G_E$ is Lyapunov. By Theorem \ref{COR7}, there is $A_0\in \AA $ subset of $A$ such that
\begin{equation}\label{eq-A0HalfA}
H^0G_E(A_0)=\frac{1}{2}H^0G_E(A).
\end{equation}
Since $A$ and $A_0$ are subsets of $E$, it follows that the function $\bar{g}=  \varphi -\IE(\varphi|\CCE)$, with $\varphi=\chi_{A_0}-\chi_{A\setminus A_0},$ vanishes outside of $E$. Let us moreover show that $\|\bar{g}\|_\infty>0$. For the sake of contradiction, assume that $\bar{g}=0$ $\mu$-a.e.. Then, the $\CCE$-measurable  set $B=\{\omega\in \Omega: \IE(\varphi|\CCE)(\omega)=1\}$ is such that $\mu(B\bigtriangleup A_0)=0$. Hence, by the non-nullity of $h^0_{i_0}$ over $A_0,$ $\int_{B} h_i^0 d\mu= \int_{A_0} h_i^0d\mu\neq 0,$ which contradicts  \eqref{eq-IEH0}. Moreover, we have for $i=1,\dots,n$, 
$$
\begin{array}{rl}
\IE(h_i^0 \bar{g}|\CCE)=&\IE(h_i^0 \chi_{A_0}|\CCE)-\IE(h_i^0\chi_{A\setminus A_0}|\CCE)\\
&- \IE \left(h_i^0\IE(\varphi|\CCE)|\CCE\right)\\
=& h_i^0G_E(A_0) -h_i^0 G_E(A\setminus A_0)-\IE(\varphi|\CCE)\IE (h_i^0|\CCE)\\
=&0.
\end{array}
$$
where the last equality follows from  (\ref{eq-A0HalfA}) and (\ref{eq-IEH0}).\\
Furthermore, as $\IE(\bar{g}|\CCE)= \IE(\varphi- \IE(\varphi|\CCE)|\CCE)=0$, we have for $i=1\dots,n$,
$$\begin{array}{rl}
\IE(h_i \bar{g}|\CCE)&= \IE(h_i^0 \bar{g}|\CCE) + \IE(\IE(h_i|\CCE)\bar{g}|\CCE)\\
&=0+ \IE(h_i|\CCE) \IE(\bar{g}|\CCE)\\
&= 0.
\end{array}
$$
This still also true for $i=n+1$ by remarking 
that $h_{n+1}= \chi_E- \sum_{i=1}^n h_i$  and $\IE(\chi_E \bar{g}|\CCE) =\chi_E\IE( \bar{g}|\CCE) =0$.
Finally, by setting $g=\frac{\bar{g}}{\sum_{i=1}^{n+1}|f_i|}$ it comes that
$\IE(|f_{i}| g |\CCE)=0$, for every $i=1,...,n+1$. Then, by (\ref{eq-signFi}), $\IE(f_{i}g |\CCE)=0$, for all $i=1,\dots,n+1$.  The function $g$ has the  desired properties.
\end{proof}

\begin{proof} [Proof of Theorem \ref{THLG}]

Set $\mu=\frac{1}{n}\overset{n}{\underset{i=1}{\sum}}\mu_i$. Then, $\mu_i<\!\!<\mu$ for every $i$ and obviously $\CC$ is $\mu$-coarser than $\AA$. Observe now that $F_\CC<\!\!<\mu$. Set, for every $i$, $h_i=\frac{d\mu_i}{d\mu}$ the Radon-Nikodym derivative of $\mu_i$ relatively to $\mu$. Hence, $f_ih_i\in L^1(\Omega,\AA,\mu)$ and from Theorem \ref{theorem-induction} and Theorem \ref{TH4}, for every fixed $E\in \AA^+$, there $g\in L^\infty(\Omega,\AA,\mu)$ vanishing outside of $E$, $\|g\|_\infty>0$ and $\IE(gf_ih_i\chi_E|\CC)=0$. But, for every $A\in \CC$, $\int_A gf_ih_i\chi_E d\mu=\int_A gf_i\chi_E d\mu_i$. Therefore, $\IE^{\mu_i}(gf_i\chi_E|\CC)=0$, for every $i$. It results from Theorem \ref{TH4}, that $F_\CC$ is Lyapunov.
\end{proof}

\section*{Acknowledgment} The authors are grateful to Ali Khan for a discussion on this paper, for all his valuable comments and suggestions.

\end{document}